\definecolor{lightgray}{gray}{0.5}
\definecolor{darkred}{rgb}{.6,0,0}
\definecolor{darkblue}{rgb}{0,0,0.5}
\renewcommand{\grad}{\nabla}
\newcommand{\lap}{\delsq}
\newcommand{\dx}{\Delta x}
\newcommand{\dt}{\Delta t}
\newcommand{\Real}{\mathbb{R}}
\newcommand{\hf}{\frac{1}{2}}
\newcommand{\uvec}{\mathbf{u}}
\newcommand{\vvec}{\mathbf{v}}
\newcommand{\Emat}{\mathbf{E}}
\newcommand{\Lmat}{\mathbf{L}}
\newcommand{\Imat}{\mathbf{I}}
\newcommand{\Mmat}{\mathbf{M}}
\newcommand{\Amat}{\mathbf{A}}
\newcommand{\Dmat}{\mathbf{D}}
\newcommand{\surf}{\mathcal{S}}
\newcommand{\gradsurf}{\grad_{\!\surf}}
\renewcommand{\lap}{\Delta}
\newcommand{\lapsurf}{\lap_{\surf}}
\newcommand{\cp}{\textup{cp}}
\newcommand{\diverge}{\textup{div}}
\newcommand{\divsurf}{\diverge_{\!\surf}}
\newtheorem{prob}[theorem]{Problem}
\newtheorem{defn}[theorem]{Definition}
\newtheorem{principle}[theorem]{Principle}
\newcommand{\param}{\gamma}
\title{An embedded method-of-lines approach to solving partial
  differential equations on surfaces}
\author{Ingrid von Glehn, Thomas M\"{a}rz, \and Colin B. Macdonald
  \thanks{Mathematical Institute, University of Oxford, OX1 3LB, UK
    (\url{{vonglehn, maerz,macdonald}@maths.ox.ac.uk}).
    This publication is based on work supported by Award No.~KUK-C1-013-04,
    made by King Abdullah University of Science and Technology (KAUST)}
}
\begin{document}

\maketitle

\begin{abstract}
We introduce a method-of-lines formulation of the closest point
method, a numerical technique for solving partial differential
equations (PDEs) defined on surfaces. This is an embedding method,
which uses an implicit representation of the surface in a band
containing the surface. We define a modified equation in the band, obtained in a
straightforward way from the original evolution PDE, and show that the solutions
of this equation are consistent with those of the surface
equation. The resulting system can then be solved with standard
implicit or explicit time-stepping schemes, and the solutions in the
band can be restricted to the surface. Our derivation generalizes
existing formulations of the closest point method and is amenable to
standard convergence analysis.
\end{abstract}

\begin{keywords} 
Closest Point Method, implicit surfaces, partial differential
equations, method of lines
\end{keywords}

\begin{AMS}
65M06, 58J35, 65M20
\end{AMS}

\pagestyle{myheadings}
\thispagestyle{plain}
\markboth{I. VON GLEHN, T. M\"{A}RZ, AND C. B. MACDONALD}{AN EMBEDDED
    METHOD-OF-LINES APPROACH TO SURFACE PDES}

\section{Introduction}
Partial differential equations (PDEs) defined on curved
surfaces appear in a variety of physical and biological
systems and applications. Examples include fluid flow on surfaces
\cite{MCC2012}, the diffusion of chemicals on cell membranes
\cite{Murray2003}, and texture mapping in computer graphics \cite{Turk1991}.

The numerical solution of these equations and treatment of surface
differential operators is an area of active research. Some methods
work directly on the surface, using either a parameterization (for a
survey, see \cite{FH2005}), or a triangulation of the surface
\cite{DE2007a}. Embedding methods form an alternative approach, in
which the surface is embedded into a larger space, and a related
equation is solved in this surrounding space. Finally, a restriction is used to
obtain the solution on the surface. The closest point method
\cite{RM2008,MR2009} is an example of such a technique. Other
embedding techniques using an implicit representation of the surface
include the level set approach of \cite{BCO+2001, Greer2006} for
variational problems, and finite element methods on implicit surfaces
\cite{Burger2009, DDE+2010}. Recently, methods using radial basis
functions \cite{FW2012, Piret2012} have been introduced.

This paper is based on the closest point method, which is applicable
to a wide variety of surface geometries, and is simple to implement
using standard well-studied numerical techniques on Cartesian grids
\cite{RM2008}. It has been applied to a variety of problems, including
eigenvalue problems \cite{MBR2011}, image segmentation \cite{TMR2009}, image
denoising \cite{BGM+2013}, and fluid effects on surfaces
\cite{AMT+2012}.

We derive a modified formulation of the closest point
embedding equation for evolution PDEs, and show that there is a
one-to-one correspondence between solutions of the embedding and surface
equations. This formulation is simple to discretize and solve
numerically using a standard method of lines approach. This
generalizes a stabilized implicit method of \cite{MR2009}, and has the
advantage that it can be adapted to a very general class of
problems. An appropriate explicit or implicit time-stepping scheme can be used,
depending on the particular problem considered.
The new method retains the advantages of the original
closest point method.

\subsection{Outline}
We begin Section~\ref{sec:cpm} with a review of the closest point
method followed by an overview of the new formulation in
Section~\ref{sec:embedding} and an example in
Section~\ref{subsec:diffexample}. Section \ref{sec:systemofeqs} then
defines a system of embedding equations, and shows that this is
consistent with the original surface PDE. This system is reduced to a
single equation in Section \ref{sec:singleeqn}. The numerical solution
of this equation is studied, and a discretization to obtain a system
of ODEs is presented in Section \ref{sec:MOL}. Numerical studies of an
introduced parameter, and convergence studies and examples are given in Sections
\ref{sec:lambda} and \ref{sec:numerics}. Finally we present some
conclusions and a discussion of future work.

\section{The Closest Point Method}
\label{sec:cpm}
Suppose we want to solve a evolutionary PDE defined on a curved surface.
A simple embedding technique known as the closest point method was
introduced in \cite{RM2008}. This method uses the fact that the
surface is embedded in $\Real^n$, and represents the surface by a
retraction based on Euclidean distance. For every point $x$ in this
surrounding space $\Real^n$ the retraction returns a surface point
which is closest to $x$. We call this retraction a closest point
function denoted by $\cp$. If a function is defined on the surface,
the data can be extended \emph{off the surface into the surrounding
  space} by assigning to each point $x$ the value of the surface
function at $\cp(x)$. The key observation is that this function is now
constant in the direction normal to the surface. Surface gradients and
surface divergences of the original function will agree with the
standard Cartesian operators of the extended function at the surface
\cite{MM2012}. We call these ideas the ``gradient principle'' and
``divergence principle''~\cite{RM2008}.

These principles are used to derive a simpler analogous PDE problem in the
embedding space (for example, replacing surface intrinsic diffusion
with the bulk or \emph{Cartesian} diffusion). However, the closest
point principles hold only \textit{on the surface}. If this analogous
PDE is evolved throughout the embedding space, the restriction to the
surface may no longer be a solution of the original equation. The
approach of the explicit closest point formulation of \cite{RM2008} is
to advance the embedding PDE only by a single time step, before a
re-extension of the data is performed. At the start of the next time
step, the surface PDE will again agree on the surface with the
analogous bulk PDE. The resulting scheme can then be expressed as a
two-step explicit method in the form of \cite{RM2008, MR2008}, which
alternates between time steps of the embedding space PDE, followed by
a re-extension of the surface data. We note this approach is not a
method of lines.

An implicit version of the closest point method was introduced in
\cite{MR2009}, allowing application to stiff problems, such as those
involving biharmonic or higher-order operators. To ensure stability,
this formulation includes a stabilizing term, which can be related to
the approach of the current paper. More general forms of the closest
point function $\cp$ were introduced in \cite{MM2012}, using notions other than
Euclidean distance to determine the mapping between the surrounding
space and the surface.


\subsection{A new approach to the embedding equation}
\label{sec:embedding}
Rather than alternating between time-steps and re-extensions as in the
original closest point method \cite{RM2008}, we investigate an
alternative approach, in which a single equation can be evolved
throughout the entire embedding space, for all time, without separate
extension steps. This is achieved by creating a modified embedding
equation with a constraint.

The solution of a given surface evolution PDE is a function $u$, which
is defined only for points on the surface. We consider instead the function $v
= u \circ \cp$, which is defined for all points in a band surrounding
the surface. Based on the gradient and divergence principles
\cite{RM2008,MM2012}, we formulate a new equation for $v$. The
constraint or side condition that $v$ is a closest point extension is
enforced by adding a penalty term to the equation. We show that
evolving this new equation throughout the space to a given time $t$, and then
restricting to the surface, results in a solution of the original
surface PDE at time $t$.

If the resulting Cartesian differential operators and extension
operators are discretized in space as in \cite{MR2009}, we obtain an
ordinary differential equation in the computational band. Thus we have
a new method-of-lines formulation of the closest point method, which
can be implemented using either explicit or implicit time-stepping.

\subsection{Example - diffusion equation}
\label{subsec:diffexample}

We first illustrate the method with an example, before giving a more
detailed derivation. Let $\surf$ be a smooth closed surface embedded
in $\Real^n$, and $u$ a scalar function on $\surf$. Consider the
surface diffusion equation
\begin{equation*}
 u_t = \Delta_{\surf} u,
\end{equation*}
subject to an initial condition $u_0$.

If $B(\surf)$ is a tubular neighbourhood of the surface in $\Real^n$ (referred
to as the band), then we can define a
\textit{closest point function} $\cp: B(\surf) \rightarrow \surf$,
as in \cite{MM2012}, which maps points in the band to points on the
surface. Typically, this will be the point closest in Euclidean
distance on the surface, but this may be made more general in certain
cases \cite{MM2012}.

The \textit{extension} operator $E$ is then
defined as $Eu(x) = u \circ \cp(x)$.

The surface differential operator (Laplace--Beltrami operator) may be
replaced by a standard Laplacian using the principles in \cite{RM2008,MM2012}
\begin{equation*}
 u_t = \Delta [Eu] {\textrm{ on }} \surf.
\end{equation*}
This equation is valid only for points $x \in \surf$, since the left
hand side $u_t$ is defined only on the surface. In order to obtain an
equation defined throughout the entire band $B(\surf)$, we perform an
extension on both sides of the equation
\begin{equation*}
Eu_t = E\Delta [Eu] {\textrm{ on }} B(\surf).
\end{equation*}
We now define a function $v = Eu$ in the embedding space. Since the
operator $E$ is independent of $t$, the previous equation can be
rewritten as
\begin{equation*}
v_t = E\Delta v {\textrm{ on }} B(\surf),
\end{equation*}
subject to the condition $v = Eu$.
But if $v$ is the extension of $u$ then $v$ must also be its own
extension (see also Lemma~\ref{lemma:cpofcpiscp}) and we obtain a
system of two equations in $v$:
\begin{subequations}
\label{eq:vsystem}
 \begin{align}
 v_t &= E\Delta v,\\
 v &= Ev.
\end{align}
\end{subequations}
We will show that the solutions $v$ of this system, when restricted to the
surface $\surf$, agree with the solutions $u$ of the original
equation; i.e. $u = v|_{\surf}$.

This system could then be approximated using the two-step method of
\cite{RM2008}, where a single time step of the first equation is
carried out, and then the side condition imposed by extending the data
$v(\cdot, t_k)$ off the surface.

We propose an alternative method, in which a single equation is
solved. The side condition is added to the equation, with a constant
multiplication factor $\param$,
\begin{equation}
 v_t = E\Delta v - \param (v - Ev). \label{eq:diffusionpenalty}
\end{equation}
This equation forms the basis for the method of lines. We show that
the solutions of this single equation agree with the solutions of the
system (\ref{eq:vsystem}), for any non-zero choice of the parameter
$\param$.
However, in practice the choice of $\param$ affects the resulting
numerical methods, as investigated in Section \ref{sec:lambda}.


\section{Defining an embedding equation}
\label{sec:systemofeqs}

We will construct an equation defined in the embedding band, and show
that there is a one-to-one correspondence between solutions of this
and the original equation on the surface. In contrast to previous
formulations, the embedding equation is satisfied for all time throughout the
computational band, not only on the surface.

We first define an \textit{extension operator}, then review the
definition of the closest point gradient, divergence and Laplacian
principles. The embedding equation is then defined through the
application of these principles to the surface differential operators.

\subsection{Closest Point Principles}
In the following, let $\surf$ be a smooth surface of dimension $k$, embedded in
$\Real^n, n \geq k$, which possesses a tubular neighbourhood
or \emph{embedding band} $B(\surf) \subset \Real^n$ surrounding the
surface $\surf$. A general class of closest point functions mapping
points in the neighbourhood to the surface was introduced in \cite{MM2012}. The
closest point function based on Euclidean distance is a special
case. Given one of these closest point functions, we define the
\textit{extension operator} $E$ which acts on surface functions, and
returns a function on the embedding band $B(\surf)$.

\begin{defn}[Closest Point Extension Operator]
If $u: \surf \times \Real \rightarrow \Real$ is a scalar-valued
function on the surface, then the closest point extension $v = Eu$ is
a function $v: B(\surf) \times \Real \rightarrow \Real$ defined as 
\begin{equation*}
v(x,t) = Eu(x,t) := u(\cp(x), t), \qquad x \in B(\surf). 
\end{equation*}
This definition can then be generalized to act on functions defined on
all of $B(S)$ by operating on the restriction of the function to the
surface $\surf$
\begin{equation*}
Ev := E(v\mid_S) = v(\cp(x), t), \qquad x \in B(\surf).
\end{equation*}
Operation on a vector-valued function is defined componentwise.\label{def:cpext}
\end{defn}

The closest point principles of \cite{RM2008,MM2012} can then be
formulated using this extension operator.
\begin{principle}[Gradient Principle]
If $E$ is a closest point extension operator according to
Definition \ref{def:cpext}, then
\begin{equation*}
 \grad [ Eu ](y) = \gradsurf u(y), \quad y \in \surf,
\end{equation*}
holds for the surface gradient $\gradsurf u$ of a smooth scalar surface
function $u : \surf \rightarrow \Real$. \label{cp:grad}
\end{principle}

\begin{principle}[Divergence Principle]
If $E$ is a closest point extension operator according to
Definition \ref{def:cpext}, then
\begin{equation*}
\diverge[Eg](y) = \divsurf g(y), \quad y \in \surf,
\end{equation*}
holds for the surface divergence $\divsurf g$ of a smooth surface vector-field
$g: \surf \rightarrow \Real^n$.  \label{cp:div}
\end{principle}

\begin{principle}[Laplacian Principle]
In the case that $E$ is the particular closest point extension
operator corresponding to Euclidean distance to the surface \cite{MM2012}, then
\begin{equation*}
\lap [ Eu ] (y) = \lapsurf u(y), \quad y \in \surf,
\end{equation*}
holds for the surface Laplacian $\lapsurf u$ of $u$.  \label{cp:lap}
\end{principle}

The gradient and divergence principles above can be combined to apply to a
wider class of functions \cite{RM2008,MM2012}. In general, we assume
$A_{\surf}$ is any surface-spatial differential operator such that the
above principles can be applied to give an operator $A$ with
\begin{equation}
A(t,x,Eu) \mid_{x=y} \; = A_S(t,y,u), \qquad y \in \surf. \label{def:AS}
\end{equation}
That is, the operator $A$ (acting on functions on $B(\surf)$) is an analog of
the operator $A_{\surf}$ (acting on functions on $\surf$) where $A$
has a standard differential operator wherever $A_{\surf}$ has a
surface differential operator.

\subsection{Equivalence of surface and embedding equations}

We first give a simple lemma, which will be used frequently in the proofs below.
\begin{lemma}
The extension operator is idempotent.\label{lemma:cpofcpiscp}
\end{lemma}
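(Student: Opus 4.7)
The plan is to show that for any function $u$ (either defined on $\surf$ or on $B(\surf)$), we have $E(Eu) = Eu$, by reducing everything to the identity $\cp \circ \cp = \cp$ on the band.

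First I would observe that $\cp: B(\surf) \to \surf$ is a retraction onto $\surf$. By the defining property of a closest point function (Euclidean-distance or the generalized notion of \cite{MM2012}), every surface point is its own closest point: for $y \in \surf$, $\cp(y) = y$. Since $\cp(x) \in \surf$ for every $x \in B(\surf)$, applying $\cp$ a second time gives $\cp(\cp(x)) = \cp(x)$ for all $x \in B(\surf)$. This is the only geometric fact needed.

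Next I would unwind the definitions from Definition~\ref{def:cpext}. For a surface function $u$, set $v = Eu$, so $v(x,t) = u(\cp(x),t)$. Then by the generalized definition,
\begin{equation*}
E v(x,t) = v(\cp(x),t) = u(\cp(\cp(x)),t) = u(\cp(x),t) = v(x,t),
\end{equation*}
which proves $E(Eu) = Eu$. For a function $w$ already defined on $B(\surf)$, the generalized definition gives $Ew(x,t) = w(\cp(x),t)$, so the same chain of equalities applies with $w$ in place of $u$, yielding $E(Ew) = Ew$.

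I do not anticipate any real obstacle here; the lemma is essentially a restatement of the fact that $\cp$ is a retraction. The only subtlety worth flagging is that the argument uses the two clauses of Definition~\ref{def:cpext} in sequence: the first clause evaluates $Eu$ at a point in $B(\surf)$, and the second clause is what allows $E$ to be applied again to the band function $Eu$. The vector-valued case then follows componentwise, as stated in the definition.
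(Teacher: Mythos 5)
Your proof is correct and follows essentially the same route as the paper: both arguments reduce the idempotence of $E$ to the retraction property of $\cp$ (i.e.\ $\cp(y)=y$ on $\surf$, hence $\cp\circ\cp=\cp$), the only cosmetic difference being that you compose $\cp$ with itself pointwise while the paper phrases it as ``$Ew$ and $w$ agree on $\surf$, and $E$ depends only on surface values.'' No gaps.
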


{\em Proof}. Let $v$ be a closest point extension of some function
$w:B(\surf) \times \Real \rightarrow \Real$, so that $v = Ew$. Since
the closest point operator $\cp$ is a retraction,
\begin{equation*}
v(y,t) = w(\cp(y),t) = w(y,t), \qquad y \in \surf,
\end{equation*}
so $v$ and $w$ agree on the surface. Then
\begin{equation*}
E^2w = Ev = E(v\mid_S) = E(w\mid_S) = Ew. \qquad \endproof
\end{equation*}

From this, it follows that if a function $v$ can be written as the
extension of another function ($v = Ew$) then $v$ must be its own
extension ($v = Ev$).

We now show that two problems, one defined only on the surface $\surf$, and one
defined in the band $B(\surf)$, have the same solutions when restricted to the
surface.

\begin{prob}[Surface Evolution PDE]
\label{prob:surfacePDE2}
Given a smooth closed surface $\surf$ in $\Real^n$, let $u: \surf \times
[0,T) \rightarrow \Real$, be a smooth solution of the PDE
\begin{equation*}
 u_t = A_S(t,y,u) \qquad u(y,0) = u_0(y), \qquad y \in \surf, \; t \in
[0,T)
\end{equation*}
where $A_S(t,y,u)$ is a linear or nonlinear surface differential operator of
the class above.
\end{prob}

\begin{prob}[Embedding Equation]
\label{prob:embedPDE2}
Given a neighbourhood $B(\surf) \subset \Real^n$ of $\surf$, let $v:
B(\surf) \times [0,T) \rightarrow \Real$ satisfy the system of equations
\begin{subequations}
\label{eq:embedPDE2}
\begin{align}
 v_t &= EA(t,x,v) \label{eq:embedPDEa}\\
 v &= Ev, \qquad \qquad x \in B(\surf), \; t \in [0, T) \label{eq:embedPDEb}
\end{align}
\end{subequations}
with initial condition $v(x,0) = v_0(x)$. Here, $A(t,x,v)$ is a spatial differential operator on $B(\surf)$ defined from $A_S$ as in
(\ref{def:AS}).
\end{prob}

\paragraph{Remark}
Note that no additional boundary conditions have been specified for Problem
\ref{prob:embedPDE2}. By (\ref{eq:embedPDEb}), the solution everywhere
off the surface (including at the boundary of $B(\surf)$) is
determined by values on the surface. Extra boundary condition are not
necessary (although the extension (\ref{eq:embedPDEb}) is consistent
in some cases with a Neumann-type boundary condition) and imposing
artificial boundary conditions can make this problem ill-posed.

\begin{theorem}
 Suppose $\surf$ is a smooth surface embedded in $\Real^n$ and
$B(\surf)\subset \Real^n$ is a neighbourhood of the surface. Then, for
each smooth solution $u : \surf \times [0,T) \rightarrow \Real$ of the
surface PDE (Problem \ref{prob:surfacePDE2}), there exists a unique
corresponding solution $v: B(\surf) \times [0,T) \rightarrow \Real$ of
the embedding equation (Problem \ref{prob:embedPDE2}), which agrees
with $u$ when restricted to the surface $\surf$. Conversely, for every
solution $v$ of Problem \ref{prob:embedPDE2}, the restriction of $v$
to $\surf$ is a solution of Problem \ref{prob:surfacePDE2}.
\end{theorem}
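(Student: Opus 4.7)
The plan is to prove the two implications separately, with Lemma~\ref{lemma:cpofcpiscp} and the consistency relation~\eqref{def:AS} doing essentially all the work.

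For the forward direction, given a surface solution $u$, I would take as candidate $v(x,t) := Eu(x,t) = u(\cp(x),t)$ on $B(\surf)\times[0,T)$. The side condition~\eqref{eq:embedPDEb} is then immediate from idempotence: $Ev = E^2 u = Eu = v$. To verify~\eqref{eq:embedPDEa}, I would differentiate in time---using that $\cp$, and hence $E$, is $t$-independent---to get $v_t(x,t) = u_t(\cp(x),t) = A_S(t,\cp(x),u)$, where the last equality comes from Problem~\ref{prob:surfacePDE2} invoked at the surface point $\cp(x)$. On the other side, $EA(t,x,v)$ evaluated at $x$ equals $A(t,y,v)|_{y=\cp(x)}$, and since $v = Eu$, the consistency relation~\eqref{def:AS} evaluated at $y = \cp(x) \in \surf$ identifies this with $A_S(t,\cp(x),u)$. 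The two expressions agree throughout the band. Uniqueness then drops out for free: any second solution $\tilde v$ of Problem~\ref{prob:embedPDE2} that restricts to $u$ satisfies $\tilde v = E \tilde v$, which forces $\tilde v(x,t) = \tilde v(\cp(x),t) = u(\cp(x),t) = v(x,t)$.

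For the converse, I would set $u := v|_{\surf}$ and verify Problem~\ref{prob:surfacePDE2}. Because $\cp$ is a retraction, $(Ef)(y) = f(y)$ for every $y \in \surf$, so restricting~\eqref{eq:embedPDEa} to the surface collapses to $u_t(y,t) = A(t,y,v)$. The side condition~\eqref{eq:embedPDEb} says $v = Ev$, which combined with $u = v|_{\surf}$ gives $v = Eu$ globally in the band; then~\eqref{def:AS} converts $A(t,y,v)|_{\surf}$ into $A_S(t,y,u)$, as required. The initial condition for $u$ is just the restriction of $v_0$ to $\surf$.

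I expect the only genuine subtlety to be bookkeeping around the operator $A$: the relation~\eqref{def:AS} permits the swap $A \to A_S$ only at points of $\surf$ \emph{and} only when the argument is a closest point extension. Both hypotheses must hold simultaneously at the critical step of each direction, and together the side condition and Lemma~\ref{lemma:cpofcpiscp} supply exactly that---which is why the proof collapses into a short identification once one tracks where each object lives. No regularity surprises should arise, since $v = Eu$ is as smooth as $u$ and $\cp$ allow, so both $v_t$ and $EA(t,x,v)$ make classical sense on $B(\surf)$.
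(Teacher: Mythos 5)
Your proposal is correct and follows essentially the same route as the paper: define $v = Eu$ and verify both equations via the time-independence of $E$, the consistency relation~\eqref{def:AS}, and Lemma~\ref{lemma:cpofcpiscp}, with uniqueness forced by $v = Ev$; conversely restrict to $\surf$ and use the retraction property together with~\eqref{def:AS}. Your pointwise bookkeeping (evaluating at $\cp(x)$ and noting explicitly that $v=Ev$ makes the argument of $A$ an extension before invoking~\eqref{def:AS}) is just a slightly more explicit rendering of the paper's operator-level identities.
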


\begin{proof}
To show existence, let $v(x,t) = Eu(x,t)$, where $u$ satisfies
Problem \ref{prob:surfacePDE2}. Then
\begin{equation*}
 v_t = \partial_t (Eu) = E(u_t) = E(A_S(t,y,u)),
\end{equation*}
from the surface PDE, and the fact that $E$ is time-independent. Now,
using (\ref{def:AS}) and the definition of $E$, we have that
\begin{equation*}
 E(A_S(t,y,u)) = E(A(t,x,Eu)\mid_{S}) = EA(t,x,Eu) = EA(t,x,v), \qquad
 x \in B(\surf),
\end{equation*}
so the first equation (\ref{eq:embedPDEa}) is satisfied. The second
(\ref{eq:embedPDEb}) follows from 
\begin{equation*}
Ev = E^2u = Eu = v.
\end{equation*}
Uniqueness follows from the fact that $v$ agrees with $u$ on $\surf$,
and that the off-surface values are uniquely defined by $v(x,t) =
Ev(x,t)$. The smoothness of $v$ is determined by the smoothness of the
surface and smoothness of $u$ \cite{MM2012}.

For the converse, suppose that $v$ is a solution of Problem
\ref{prob:embedPDE2}, and let $u$ be the restriction of $v$ to the
surface, $u = v|_{\surf}$. Then
\begin{equation*}
\partial_t u = \partial_t v|_{\surf} = EA(t,x,v)|_{\surf}.
\end{equation*}
Since the extension operator leaves values on the surface unchanged,
and using the closest point principles on $A$,
\begin{equation*}
\partial_t u = EA(t,x,v)|_{\surf} = A(t,x,v)|_{\surf} = A_{\surf}(t,y,
v|_{\surf}) = A_{\surf}(t, y, u),
\end{equation*}
so $u$ is a solution of Problem \ref{prob:surfacePDE2} as required.
\end{proof}


\section{From constrained embedding problem to a single equation}
\label{sec:singleeqn}

We will show that the system of embedding equations (\ref{eq:embedPDE2}) defined
in the band $B(\surf)$ has the same set of solutions as a single
equation on $B(\surf)$.

\begin{prob}
\label{prob:lambdaPDE}
Given $\param \in \Real$, let $v: B(\surf) \times [0,T) \rightarrow
\Real$ satisfy
\begin{equation}
 v_t = EA(t,x,v) - \param \left(v - Ev\right),
\qquad x \in B(\surf), \; t \in (0,T) \label{eq:lambdaPDE}
\end{equation} with initial condition $v(x,0) = v_0(x)$.
\end{prob}

\begin{theorem}
\label{thm:singleeqn}
Suppose that $v: B(\surf) \times [0,T) \rightarrow \Real$ is a solution
of the system of equations (Problem \ref{prob:embedPDE2}). Then $v$ also
satisfies the single equation (Problem \ref{prob:lambdaPDE}) for all
$\param \in \Real$.
 Conversely, if $v$ is a solution of Problem \ref{prob:lambdaPDE}
 with initial condition $v_0 = Ev_0$, then $v$ will satisfy the system
 of embedding equations (Problem \ref{prob:embedPDE2}).
\end{theorem}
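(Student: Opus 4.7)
The forward direction is essentially a one-line observation: if $v$ solves Problem \ref{prob:embedPDE2}, then the side condition $v = Ev$ makes the penalty term $\param(v - Ev)$ vanish identically for every $\param \in \Real$, so equation (\ref{eq:lambdaPDE}) reduces to (\ref{eq:embedPDEa}), which $v$ already satisfies. I would state this first and move on.

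For the converse, the natural object to track is the \emph{constraint defect} $w := v - Ev$. The assumption $v_0 = Ev_0$ gives $w(\cdot, 0) = 0$, and the goal is to show $w \equiv 0$ for all $t \in [0,T)$, which then collapses (\ref{eq:lambdaPDE}) to (\ref{eq:embedPDEa}) and simultaneously delivers the side condition (\ref{eq:embedPDEb}). The key step is to derive a closed evolution equation for $w$ alone. Since $E$ acts only in space, $(Ev)_t = E v_t$, so
\begin{equation*}
  w_t = v_t - E v_t = \bigl[EA(t,x,v) - \param(v - Ev)\bigr] - E\bigl[EA(t,x,v) - \param(v - Ev)\bigr].
\end{equation*}
Applying Lemma \ref{lemma:cpofcpiscp} twice ($E^2 A(t,x,v) = EA(t,x,v)$ and $E^2 v = Ev$, so $E(v - Ev) = 0$), the $EA$ terms cancel and the right-hand side simplifies to $-\param(v - Ev) = -\param w$.

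Thus $w$ satisfies the trivial linear ODE $w_t = -\param w$ pointwise in $x$, with $w(x,0) = 0$. Uniqueness for this scalar ODE immediately yields $w \equiv 0$, hence $v = Ev$ on $B(\surf) \times [0,T)$. Substituting back into (\ref{eq:lambdaPDE}) recovers $v_t = EA(t,x,v)$, completing Problem \ref{prob:embedPDE2}. I do not expect a real obstacle here; the only care required is to keep track of the idempotence of $E$ (so the $EA$ contributions cancel cleanly) and to note that $E$ commutes with $\partial_t$ because $\cp$ is time-independent. It is also worth remarking that the argument works for every $\param \in \Real$, including $\param = 0$, in which case $w_t = 0$ still forces $w \equiv 0$ from the initial compatibility $v_0 = Ev_0$.
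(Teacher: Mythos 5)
Your proof is correct and follows essentially the same route as the paper: the forward direction by noting the penalty term vanishes under $v = Ev$, and the converse by applying $E$ to the equation (equivalently, computing $w_t = v_t - Ev_t$), using the idempotence of $E$ from Lemma~\ref{lemma:cpofcpiscp} to reduce to the scalar ODE $w_t = -\param w$ with $w(\cdot,0)=0$, and concluding $w \equiv 0$. Your explicit remarks that $E$ commutes with $\partial_t$ and that the argument covers $\param = 0$ are fine refinements but do not change the argument.
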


\begin{proof}
The first part follows directly, since $v = Ev$ (\ref{eq:embedPDEb})
implies that the extra term multiplied by $\param$ is zero, and the
single equation (Problem \ref{prob:lambdaPDE}) becomes
equivalent to (\ref{eq:embedPDEa}).

For the converse, we operate on both sides of (\ref{eq:lambdaPDE}) with an
extension operator $E$, and use the fact that this operator is
idempotent:
\begin{equation*}
Ev_t = EA(t,x,v) - \param \left(Ev - Ev\right) = EA(t,x,v). 
\end{equation*}
Subtracting this from (\ref{eq:lambdaPDE}) gives
\begin{equation*}
(v-Ev)_t = -\param(v-Ev).
\end{equation*}
We now define a function $z = v - Ev$, to obtain an ODE for $z$:
\begin{equation*}
z_t = -\param z,
\end{equation*}
with initial condition $z_0 = v_0 - Ev_0 = 0$. This has unique
solution $z \equiv 0$. 

It follows that $v = Ev$, and so the second equation of the system
(\ref{eq:embedPDEb}) holds. Again, the extra term in
(\ref{eq:lambdaPDE}) is zero, and so the single equation is equivalent
to the system of equations. 
\qquad \end{proof}

\subsection{Remark on boundary conditions}

As above for the system of equations, the single equation
(\ref{eq:lambdaPDE}) does not require any additional boundary
conditions at the boundaries of $B(\surf)$. The imposition of other
boundary conditions could cause this problem to be ill-posed, for
example, if they are contradictory to $v = Ev$.

\subsection{The Poisson Problem}

A similar approach can be used for time-independent problems. Here, we show the
Poisson equation as an example, but this can be generalized to equations of the
form $A_S(x,t,u) = f$ for the same class of operators as above.

\begin{theorem} 
Consider the system of embedding equations obtained as above from the Poisson
equation $\lapsurf u = f$ on a surface $\surf$, 
\begin{subequations}
\begin{align}
 E\lap v &= Ef \label{eq:Poissona}\\
v &= Ev, \qquad x \in B(\surf). \label{eq:Poissonb}
\end{align}
\end{subequations}
The solutions of this system are the same as the solutions of the
single equation
\begin{equation}
E\lap v - \param(v - Ev) = Ef, \qquad x \in
B(\surf), \label{eq:Poissonembed}
\end{equation}
for any $\param \in \Real \backslash \{0\}$.
\end{theorem}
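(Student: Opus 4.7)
The plan is to mirror the structure of the proof of Theorem \ref{thm:singleeqn}, adapting it to the stationary setting. The forward direction is immediate: if $v$ satisfies the system (\ref{eq:Poissona})--(\ref{eq:Poissonb}), then $v - Ev = 0$, so the penalty term in (\ref{eq:Poissonembed}) vanishes identically and the single equation reduces to (\ref{eq:Poissona}), which holds by assumption. No hypothesis on $\param$ is needed here.

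For the converse, I would apply the extension operator $E$ to both sides of (\ref{eq:Poissonembed}). Using Lemma \ref{lemma:cpofcpiscp} (idempotency of $E$) on each term, the penalty contribution $E(v - Ev) = Ev - E^2 v = Ev - Ev$ collapses to zero, yielding
\begin{equation*}
E\lap v = Ef.
\end{equation*}
This recovers (\ref{eq:Poissona}) directly. Subtracting this identity from the original single equation (\ref{eq:Poissonembed}) gives
\begin{equation*}
-\param(v - Ev) = 0,
\end{equation*}
and here is the one spot where the hypothesis $\param \neq 0$ is essential: dividing by $\param$ yields $v = Ev$, which is (\ref{eq:Poissonb}).

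There is no genuine obstacle in this argument, and in fact it is simpler than the evolution case of Theorem \ref{thm:singleeqn} because we do not need to solve an auxiliary ODE for $z = v - Ev$; the constraint $v = Ev$ pops out algebraically once we subtract the two identities. The only subtlety worth flagging in the proof is the contrast with Theorem \ref{thm:singleeqn}: in the time-dependent case $\param = 0$ was admissible because the closest point property was preserved by the initial condition $v_0 = Ev_0$; here no initial condition is available to enforce the constraint, so one truly needs $\param \neq 0$ to pin down $v = Ev$ from the penalty term. I would state this remark briefly after the proof to explain the excluded value.
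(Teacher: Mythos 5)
Your proposal is correct and follows essentially the same route as the paper's proof: the forward direction is the trivial vanishing of the penalty term, and the converse applies $E$ to the single equation, uses idempotency of $E$ to get $E\lap v = Ef$, and then recovers $v = Ev$ from the penalty term using $\param \neq 0$. Your closing remark contrasting this with the time-dependent case (where $\param=0$ is rescued by the initial condition $v_0 = Ev_0$) is a correct and reasonable addition, but not a difference in the proof itself.
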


\begin{proof}
 If $v$ is a solution of (\ref{eq:Poissona}) and (\ref{eq:Poissonb}), then the
additional term in (\ref{eq:Poissonembed}) is zero, so the single
equation is satisfied. Conversely, if $v$ satisfies
(\ref{eq:Poissonembed}), then we may extend the equation to obtain
\begin{equation*}
E^2 \lap v - \param (Ev - E^2v) = E^2 f,
\end{equation*}
and using the idempotence of $E$, this is 
\begin{equation*}
E \lap v - \param (Ev - Ev ) = Ef.
\end{equation*}
It follows that
\begin{equation*}
E \lap v = Ef,
\end{equation*}
and substituting back in (\ref{eq:Poissonembed}), for any non-zero
$\param$, we have that $v = Ev$.
\qquad \end{proof}

This approach could easily be implemented for a more general differential
operator as defined in (\ref{def:AS}). Poisson problems are
investigated in another work \cite{CM}.

\section{A method-of-lines discretization}
\label{sec:MOL}

The conversion of the system of two equations --- the extended PDE
(\ref{eq:embedPDEa}) and the constraint (\ref{eq:embedPDEb}) --- into
a single equation (\ref{eq:lambdaPDE}) can now be used to define a
method-of-lines discretization. The band $B(\surf)$ is discretized
using a standard uniform Cartesian grid in $\Real^{n}$, with $N$
points in the embedding band. The vector $\vvec \in \Real^N$ is
defined as the set of values of the function $v$ at these
points. Following \cite{MR2009}, we discretize the spatial
differentiation operators on this grid using standard finite
difference schemes to obtain matrices. For example, in 2D the
Cartesian Laplacian is discretized by a matrix $\Lmat$, the standard
5-point discrete Laplacian.

Multiplication by the matrix $\Emat$ \cite{MR2009} implements the
discrete extension of a surface function, approximating the extension $E$
using interpolation on the grid points surrounding the closest point.
We note that this matrix operator is no longer idempotent, which
complicates the theory in the semi-discrete case, and indeed in this work our
analysis is mostly applied to the continuous operator.

The necessary size of the band $B(\surf)$ to contain the
differentiation and interpolation stencils is discussed in
\cite{RM2008, MR2009} and is a small multiple of the mesh parameter
$\dx$.

With the discrete operators inserted into the equation, we obtain a
system of ordinary differential equations for the vector $\mathbf{v}$
\begin{subequations}  \label{eq:semidiscrete}
\begin{equation}  \label{eq:semidiscreteheat}
 \partial_t \vvec = \Emat \Lmat \vvec  - \param(\Imat-\Emat) \vvec,
\end{equation}
or more generally
\begin{equation}
  \partial_t \vvec =
  \Emat \Amat \vvec
  - \param(\Imat-\Emat) \vvec,
\end{equation}
\end{subequations}
where $\Amat$ is the matrix discretization, in the linear case, of the
operator $A$ in (\ref{def:AS}).

This system of ODEs can then be solved using either implicit or explicit
time-stepping (or a combination). The consistency, convergence and
stability of the method will depend on the interpolation, spatial
discretization and time-stepping schemes. We discuss some of these
issues, in particular how these relate to the choice of the parameter
$\param$ in Section~\ref{sec:lambda}.

\subsection{Comparison to other semi-discrete formulations}

In the formulation of \cite{RM2008}, time steps of the discretized PDE
are alternated with an extension step:
\begin{enumerate}
\item complete one time step of $\partial_t \vvec = \Lmat\vvec$;
\item perform a re-extension $\vvec = \Emat\vvec$.
\end{enumerate}
This approach is not a method of lines; it forces the solution to be
constant in the direction normal to the surface after each time
step. In our method-of-lines approach, this requirement is imposed
with the penalty term in the PDE itself, so that no explicit
re-extension step is required.

In \cite{MR2009} an initial suggestion for a method-of-lines approach
in the particular case of the diffusion equation was the equation
$\partial_t \vvec = \Lmat \Emat \vvec$.
However, as this was seen to be unstable, a stabilized version was
proposed, solving
 $\partial_t \vvec = \Mmat \vvec$,
where the matrix $\Mmat$ was given by
\begin{equation*}
 \Mmat = \Lmat \Emat - \frac{2d}{(\Delta x)^2}(\mathbf{I}-\Emat).
\end{equation*}
At least for the diffusion equation, this is very similar to our
\eqref{eq:semidiscrete} which also solves
$\partial_t \vvec = \Mmat \vvec$
but with
\begin{equation*}
 \Mmat = \Emat \Lmat  - \param(\Imat-\Emat).
\end{equation*}
Note that the order of the matrices $\Emat$ and $\Lmat$ is reversed,
and the factor $\frac{2d}{(\Delta x)^2}$ is generalized with the
introduction of a new parameter $\param$ (although in practice we
recommend this same value for the Laplace--Beltrami operator).
In \cite{MR2009}, the discrete operator $\Mmat$ was defined based on a
special treatment of the diagonal of the discretized operator.
The new formulation \eqref{eq:semidiscrete} is based on a different
concept: we penalize the \emph{equation}, not the \emph{operator} and
this makes the approach more general.

\subsection{Nonlinear and higher-order operators}
\label{subsec:nonlinear}

Previous formulations of the stabilized operator, such as those in
\cite{MR2009} and \cite{MBR2011}, were stated for the
Laplace--Beltrami operator, and did not include a general methodology
for variable coefficient or nonlinear equations. The new method can
easily be formulated to include such operators. In
Section~\ref{sec:numerics}, we show numerical results on nonlinear
curvature-dependent diffusion and reaction-diffusion equations. As an
example of higher-order operators (which require further extensions
$E$), we consider here the biharmonic operator $\lapsurf^2$.

The surface biharmonic equation $u_t = -\lapsurf^2 u$ can be converted
using the closest point principles to the form $u_t = -\lap E
\lap E u$ on the surface. Now operating with an extension on both
sides of the equation, and substituting $v = Eu$, we have
\begin{equation*}
v_t = -E\lap E\lap v,
\qquad\text{subject to $v = Ev$.}
\end{equation*}
Forming a single equation and discretizing as before in
\eqref{eq:semidiscrete} gives the semi-discrete form
\begin{equation*}
\vvec_t = -\Emat \Lmat \Emat \Lmat \vvec - \param(\Imat - \Emat) \vvec.
\end{equation*}
We see that in general, the penalty term $\param(v - Ev)$ remains,
and an additional extension operator $E$ is included.
Note that this differs from the procedure in \cite{MR2009},
which uses the squared matrix operator $\Mmat\Mmat$ in the biharmonic case.
Both approaches seem to work in practice.
The advantages of each (or perhaps even of combinations) remains to be
studied. Fully non-linear problems also warrant further study.

\subsection{Summary of the method-of-lines approach}
The resulting algorithm can be summarized as:
\begin{enumerate}
\item Extend the surface equation into the band by applying the
  extension operator $E$, and then use Principles \ref{cp:grad} and
  \ref{cp:div} to replace surface differential operators with their
  Cartesian analogs.
\item Add the penalty term $-\param(v-Ev)$ to the PDE.
\item Use standard discretizations in space for the differential and
extension operators, and an appropriate time-stepping scheme to solve
the resulting system.
\end{enumerate}

\section{Effect of the parameter $\param$}
\label{sec:lambda}

It should be emphasized that the parameter $\param$ is not a Lagrange
multiplier; it is not necessary to solve for a value of $\param$ as
part of the solution procedure. Rather, $\param$ is a numerical
parameter that controls how strongly the constraint is imposed. The
parameter $\param$ may affect the consistency and stability of the
method. Numerical tests below suggest that a wide range of values
result in convergent schemes.

\subsection{Penalty term and zero-stability}
\label{subsec:penalty}

The term $-\param(v-Ev)$ imposes the side condition $v = Ev$ to the PDE and can
be viewed as a penalty term in the equation.
If $v$ is not constant in the direction normal to the surface, then
the term $v - Ev$ can be large, and dominate the term containing the
differential operator.
We analyze this term by considering the trivial time-dependent PDE
$u_t=0$ which leads to the equation
\begin{equation}
v_t = -\param(v-Ev). \label{eq:zerostab}
\end{equation}
Any deviation in the normal direction will be penalized by a large
right hand side.
We will show that positive values of $\param$ will return the system
to the stable equilibrium, while negative values of $\param$ may lead
to instability.

Operating on both sides of this equation with an
extension $E$, and using the idempotence of the operator gives 
$Ev_t =0$.
As in the proof of Theorem~\ref{thm:singleeqn}, we subtract this from
\eqref{eq:zerostab}, and define the function $z = v-Ev$.
We can then study this as an ODE system
\begin{equation}  \label{eq:dahlquist}
z_t = -\param z, \quad \text{ with } \; z_0 = v_0 - Ev_0,
\end{equation}
which has unique solution $z(x,t) = z_0(x) e^{-\param t}$.
If the initial condition is perturbed slightly, so
that it is no longer zero (i.e., $v_0$ is not exactly a closest point
extension), then the function $z$ will still decay to zero, provided
that $\param$ is positive.

As \eqref{eq:dahlquist} is essentially the Dahlquist test equation
\cite{HNW1993}, the region of absolute stability is determined by the
time-stepping method used. For example, for the forward Euler method,
the region of absolute stability is $|1 - \param \Delta t| \leq 1$.
With positive $\param$, this implies a time step restriction $\Delta
t \leq \frac{2}{\param}$. Likewise, for the explicit four-stage
Runge--Kutta scheme we have a stability restriction of $\Delta t \leq
\frac{2.79}{\param}$. For A-stable methods such as the implicit Euler
method, all positive values of $\param$ give stable solutions for the
trivial PDE.

Although this analysis was based on the continuous operator $E$ rather
than the discrete $\Emat$, our computations below suggest good agreement.
Thus returning to the semi-discrete problem \eqref{eq:semidiscrete},
we expect a stability restriction (when using an explicit scheme)
based on each of the two terms in the equation and we will take
$\Delta t$ based on the minimum of the two restrictions. One
reasonable strategy in choosing $\param$ is to avoid increasing the
stiffness of the system (compared to that of the Cartesian
discretization of the equivalent non-surface PDE problem).

\paragraph{\indent{Example: surface diffusion equation}}
If we discretize \eqref{eq:semidiscreteheat} using forward Euler in
time and the standard second-order scheme for the Laplacian, we might
expect a time step restriction of
\begin{equation}  \label{eq:minstab}
  \Delta t \leq \min\left(
    \frac{\Delta x^2}{2d},
    \frac{2}{\param} \right).
\end{equation}
Thus, at least for the surface diffusion equation, we can recommend a
value of $\param$ of
\begin{equation}  \label{eq:recommendlambda}
  \param = \frac{2d}{\Delta x^2},
\end{equation}
and with this choice we can expect the usual choice $\Delta t \leq
\frac{\Delta x^2}{2d}$ to result in a stable scheme
(with a factor of two to spare).

Figure~\ref{fig:zerostab} illustrates that the above theory correctly
predicts the practical stability properties. On the unit circle, we
consider the equation $u_t = \lapsurf u - u$ with semi-discrete form
$\vvec_t = \Emat \Lmat \vvec - \vvec - \param (\Imat - \Emat) \vvec$.
We discretize with forward Euler and estimate the largest possible
stable time-step; the results are very close to
\eqref{eq:minstab}. Using our suggested value of $\param$ from
\eqref{eq:recommendlambda} allows the time-step predicted by the
standard non-surface Cartesian finite difference scheme. In practice
if a larger value of $\param$ is desirable, then the time-step $\dt$
could simply be reduced for stability.

\begin{figure}
\begin{center}
\includegraphics[width = 0.48\textwidth]{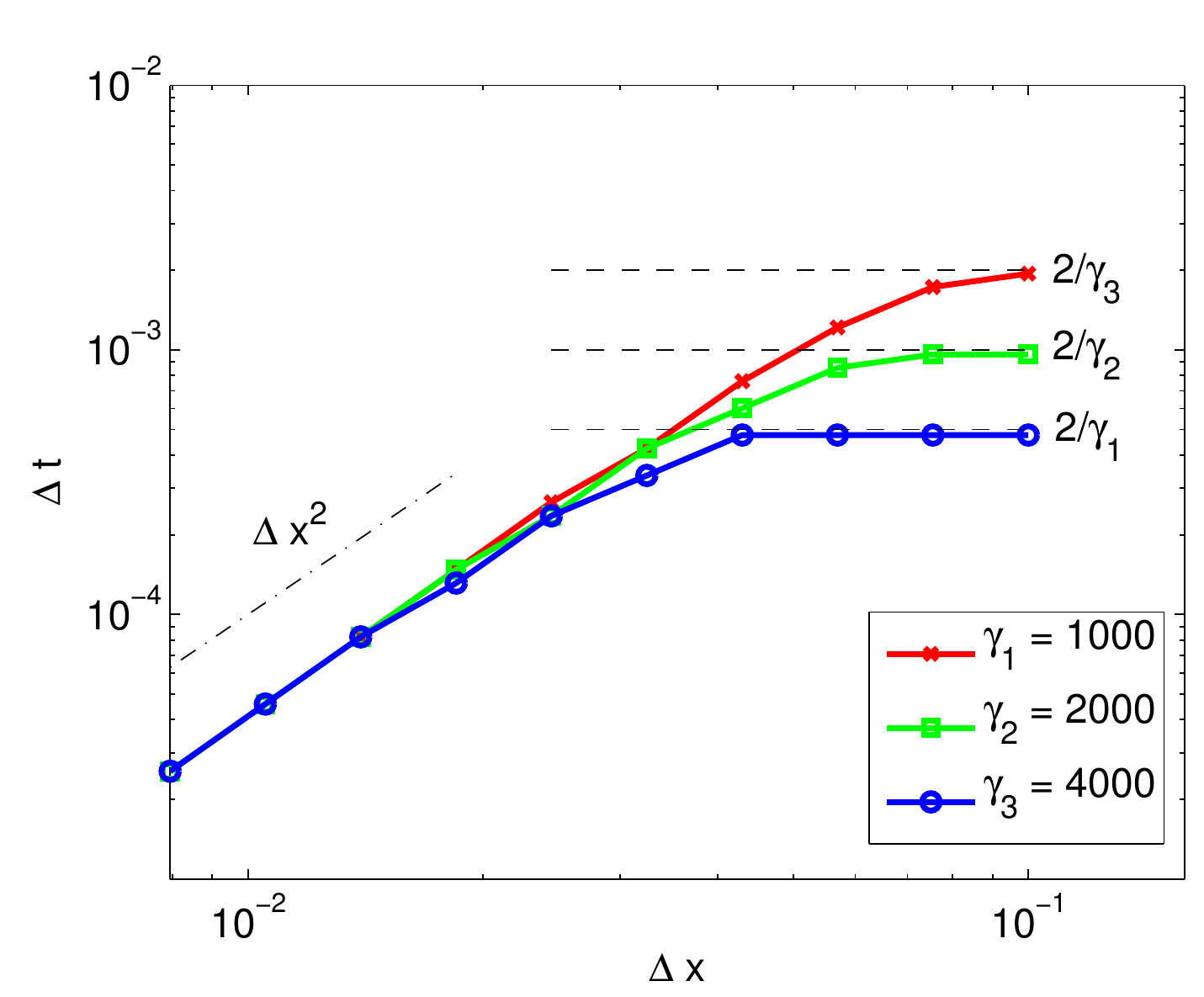}
\end{center}
\caption{Maximum value of $\dt$ giving stable solutions of $u_t =
  \lapsurf u - u$ on the unit circle, using forward Euler
  time-stepping.
}\label{fig:zerostab}
\end{figure}

\subsection{Consistency}

Requiring the method to be consistent also places certain restrictions on
the penalty parameter $\param$, as well as the interpolation order
$p$ of the extension operator. Again we consider the case of the
Laplace--Beltrami operator as an example. With a second-order spatial
discretization and first-order explicit time-stepping, the scheme can
be written
\begin{equation}
\frac{\vvec^{n+1}-\vvec^{n}}{\dt} = \Emat_p \Lmat \vvec^n - \param
(\vvec^n - \Emat_p \vvec^n).\label{eq:consistency}
\end{equation}
As before, $\Lmat$ and $\Emat_p$ are discretizations of the Laplacian
and extension operators, with polynomial interpolation of order $p$ in
the extension \cite{MR2009}. The truncation error will include standard
terms of order $O(\dt) + O(\dx^2)$ from the discretization, as well as
a term proportional to $\dx^{p+1}$ from the extension operator. If
$\param$ is chosen to scale with $\Delta x^{-\alpha}$, then the final
term in the truncation error is a contribution of
$O(\dx^{p+1-\alpha})$ from the penalty term (note that the exact
solution is an extension so the truncation error in the penalty term
is simply that of the discrete operator $\Emat_p$). Combining these
results gives an overall order of accuracy of the method of
\begin{equation*}
  O(\dt) + O(\dx^2)+ O(\dx^{p+1}) + O(\dx^{p+1-\alpha}).
\end{equation*}

For first-order consistency, it is necessary that $p \geq \alpha$, so
if $\param = O(\dx^{-2})$ (which may be required for stability), then
at least degree 2 interpolation in $p$ is needed. To maintain second
order convergence in $\dx$, at least $p=3$ is required.

Note that if in some situation, the dependence of $\param$ on $\dx$
could be freely chosen, then setting $\param$ to be a constant
and using $p=1$ should also give second order convergence.
This would be computationally more efficient, since then only
bilinear/trilinear interpolation matrices could be used. Further
details of the consistency of the closest point method are given in
\cite{MM2013}.

\subsection{Stability}

The stability of the system will also depend on the choice of
$\param$, for the equation and discretization considered. In the case that
$\param$ is zero, the side condition is not enforced at each time step.
In practice, in this case small errors in the normal direction tend to
grow over time, eventually leading to instability.
Figure \ref{fig:stability} shows how the maximum error in the solution
depends on $\param$ for the particular case of the heat equation on
the unit circle at time $t=0.5$, using forward and backward Euler
time-stepping with
$\Delta t = \frac{1}{4}\Delta x^2$ and $\Delta t = \frac{1}{4}\Delta
x$ respectively. The vertical line in the first figure is at $\param
\Delta x^2 = 8$, where the solution becomes unstable at large $\param$
due to the loss of zero-stability described in
Section~\ref{subsec:penalty}. For implicit time-stepping, the large $\param$
instability does not occur. As $\param$ becomes too small ($\param
\Delta x^2 \approx 0.1$, the solution may also become
unstable. Intuitively, this is because the penalty is not strong
enough to impose the constraint. For the heat equation, a suggested
value is $\param \approx \frac{4}{\Delta x^2}$ (and this is the same
value chosen in \cite{MR2009}). In \cite{CM}, a case is considered
where the two extension operators in the scheme (\ref{eq:consistency})
have different degrees $p$ of interpolation. For the Poisson equation
on closed curves in $\Real^2$, the scheme with two extension operators
with polynomial interpolations of order $1$ and $3$, and $\param
\approx \frac{4}{\Delta x^2}$ is shown to be second-order and stable.

\begin{figure}
\begin{center}
 \subfloat[][]{
 \includegraphics[width=0.45\textwidth]{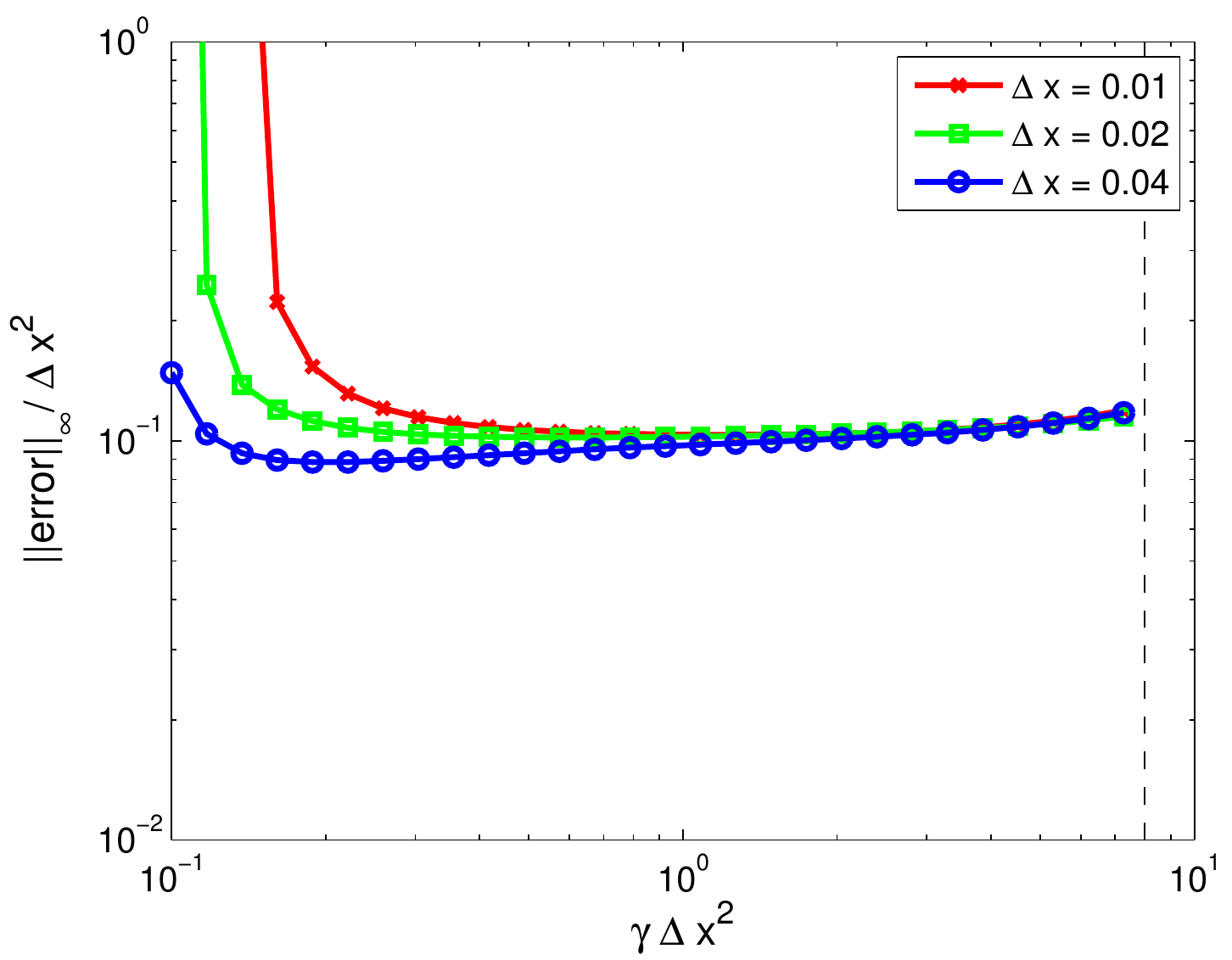}
 }\label{subfig:stabFE}
 \subfloat[][]{
 \includegraphics[width=0.45\textwidth]{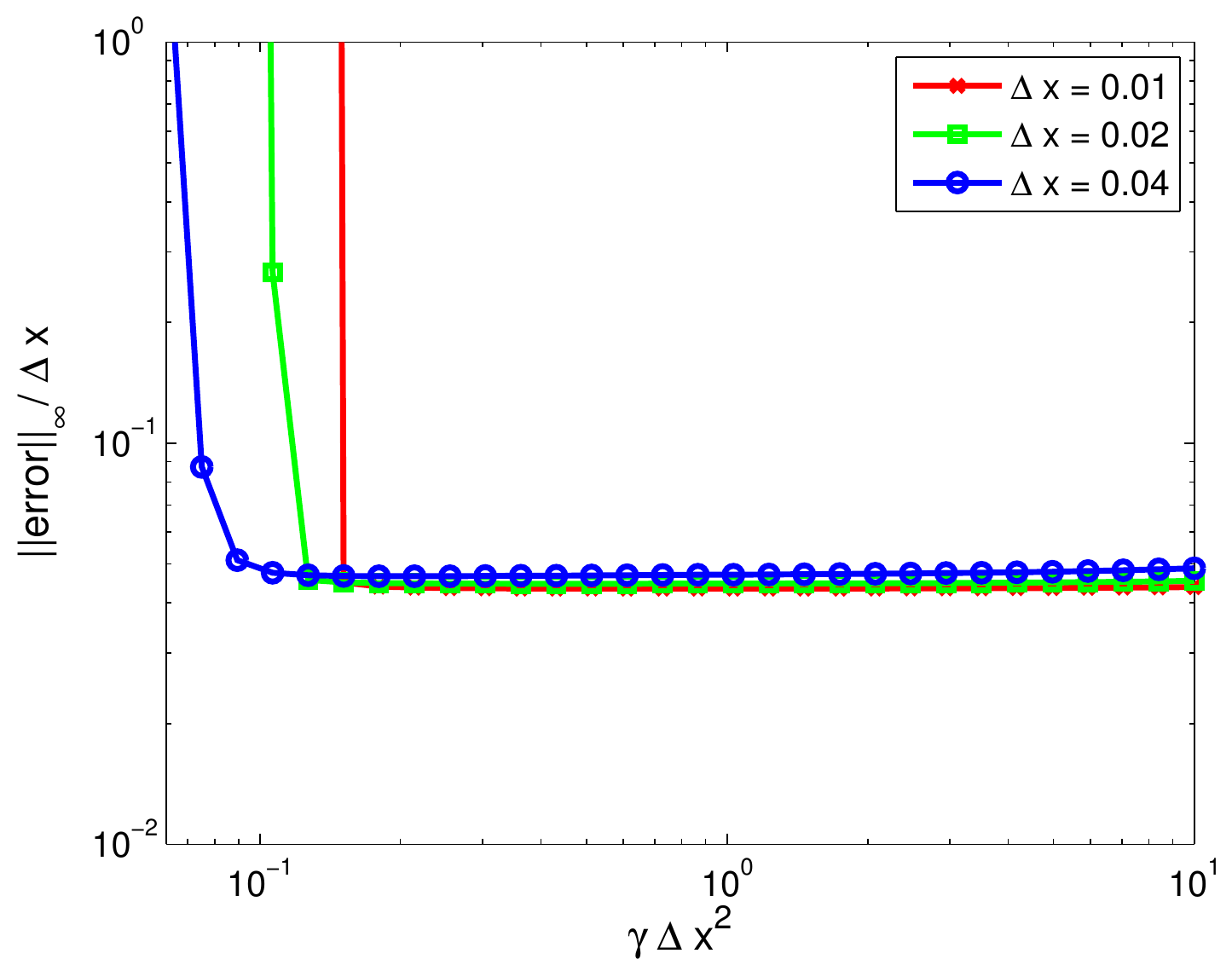}\label{subfig:stabBE}
 }
\end{center}
 \caption{Maximum error as a function of $\param \Delta x^2$, for the
 heat equation on the unit circle, using (a) explicit and (b) implicit Euler time-stepping.}
 \label{fig:stability}
\end{figure}

\subsection{Relationship to the explicit method of Ruuth \& Merriman}
Note that if we choose $\param = \frac{1}{\dt}$, the resulting
iteration for the surface heat equation will be the
same as the two-step method of \cite{RM2008}. The scheme (\ref{eq:consistency})
becomes
\begin{equation*}
\vvec^{n+1} = \Emat_p(\Delta t \Lmat \vvec^n + \vvec^n),
\end{equation*}
which corresponds to applying one step of first-order explicit
time-stepping, followed by performing an extension.

\section{Numerical examples}
\label{sec:numerics}

We demonstrate the effectiveness of the new method with various
examples in 2D and 3D.

\subsection{Diffusion equation on the unit circle and unit sphere}

The diffusion equation example of Section \ref{subsec:diffexample}
is studied on the unit circle embedded in 2D, and the unit sphere
embedded in 3D. Starting from the surface equation $u_t = \lapsurf u$,
the resulting embedding equation with the penalty term is
\begin{equation*}
 v_t = E\lap v - \param (v - Ev).
\end{equation*}
We take the standard parameterization $\sigma:[0, 2\pi) \rightarrow
\surf, \sigma(\theta) = (\cos(\theta), \sin(\theta))^T$, for the unit
circle, and write $\bar{u}(t, \theta) = u(t, \sigma(\theta))$. The
initial condition on the circle is taken to be $\bar{u}(0, \theta) =
\cos \theta + \cos 3 \theta$, giving exact solution $\bar{u}(t,\theta)
= e^{-t} \cos \theta + e^{-9t} \cos 3 \theta$. Similarly, the
parameterization of the sphere is given by $\sigma:(-\pi, \pi] \times
[-\frac{\pi}{2}, \frac{\pi}{2}] \rightarrow \surf, \sigma(\theta,
\phi) = (\cos \theta \cos \phi, \sin \theta \cos \phi, \sin
\phi)$. The initial condition is $\bar{u}(0,\theta,\phi) = \cos(\phi +
1/2)$, so that $\bar{u}(t,\theta,\phi) = e^{-2t}\cos(\phi + 1/2)$.

Standard second-order central differences are used to discretize the
Laplacian, and the order $p$ of the polynomial interpolation is varied. Figures
\ref{subfig:diffexp2D} and \ref{subfig:diffBDF3D} show convergence studies with
explicit and implicit time-stepping, using a forward Euler and BDF2
scheme respectively. The parameter $\param$ is fixed to be
$\frac{2d}{\dx^2}$, where $d$ is the dimension of the embedding space.
The solution is run in time until $t = 0.5$,
using $\dt = \frac{1}{4} \dx^2$ in the explicit case, or $\dt = \frac{1}{4}
\dx$ for the implicit BDF2 scheme. The figures demonstrate the
expected second-order convergence for $p \geq 3$. We compute the error
by restricting the solution of the embedding equation to the surface,
and computing the max-norm error over the discrete approximation to
$\surf$.

\begin{figure}
 \subfloat[][]{
 \includegraphics[width=0.45\textwidth]{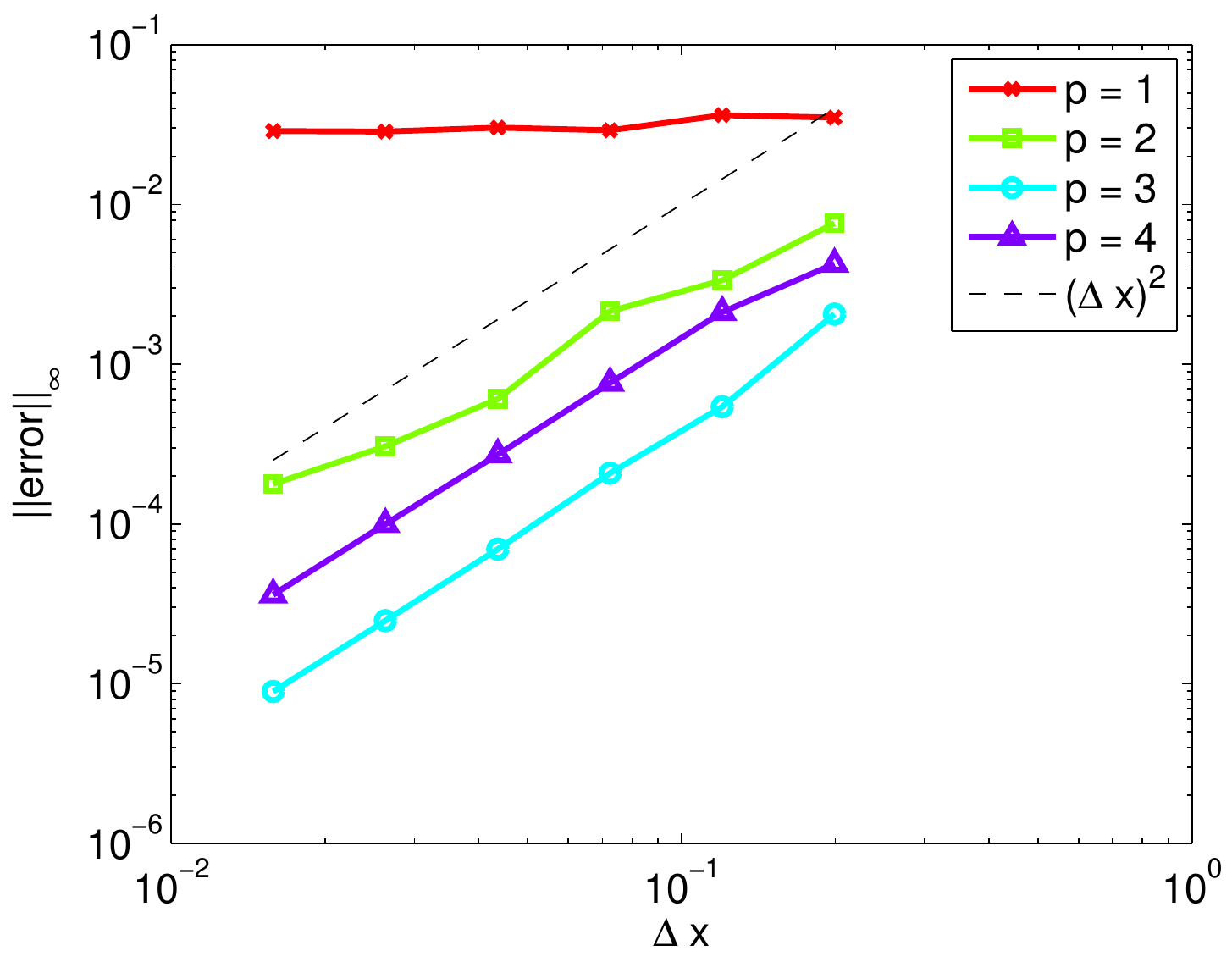}
  \label{subfig:diffexp2D}
}
 \subfloat[][]{
 \includegraphics[width=0.45\textwidth]{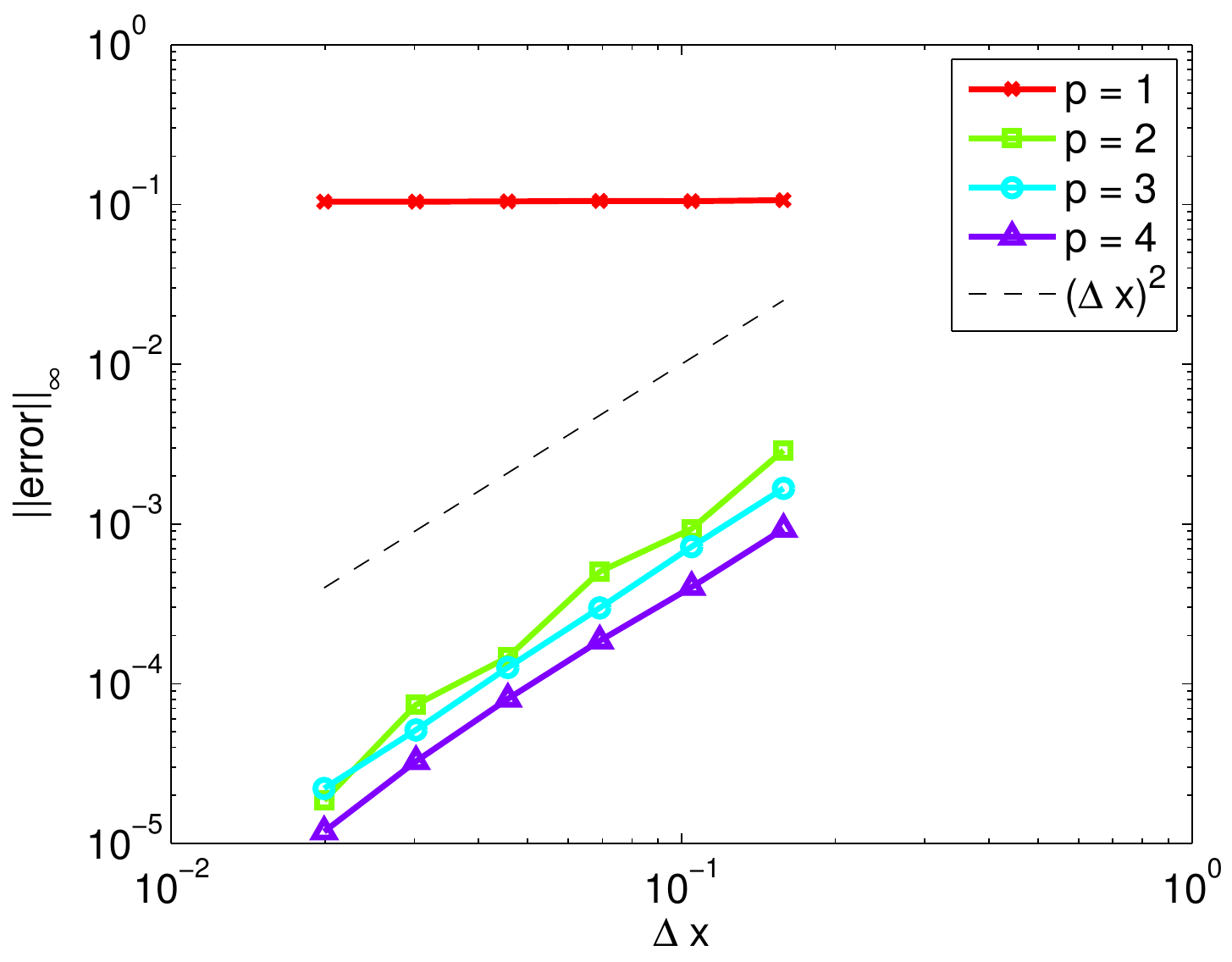}
 \label{subfig:diffBDF3D}
 }
 \caption{Numerical convergence studies for the diffusion
  equation on the unit circle (a) and unit sphere (b), using forward
  Euler and BDF2 time-stepping respectively, with $\param =
  \frac{4}{\dx^2}$.}
 \label{fig:diff}
\end{figure}

\subsection{Biharmonic equation}

As an example of a higher-order operator requiring more extensions,
consider the biharmonic equation $u_t = -\lapsurf^2 u$, again on
the unit circle in 2D. As in Section \ref{subsec:nonlinear},
the resulting embedding PDE is 
\begin{equation}
v_t = -E \lap E \lap v - \param(v - Ev)
\end{equation}
The initial condition $u(\theta,0) = \cos \theta + \cos 3
\theta$ results in the exact solution $u(\theta, t) = e^{-t} \cos
\theta + e^{-81t} \cos 3 \theta$ at time $t$.

Explicit time step restrictions become prohibitive for the higher-order
operators, so only implicit schemes are considered. We do not yet know
how to choose $\param$ in this biharmonic case; further work is
required. However, with $\param = \frac{4}{\dx^2}$ and $p \geq 4$, we
do observe second order convergence in Figure~\ref{fig:biharm}, which
shows the error in a BDF2 implicit time-stepping scheme, with $\dt =
\frac{1}{4}\dx$.

\begin{figure}
\begin{center}
\includegraphics[width=0.5\textwidth]{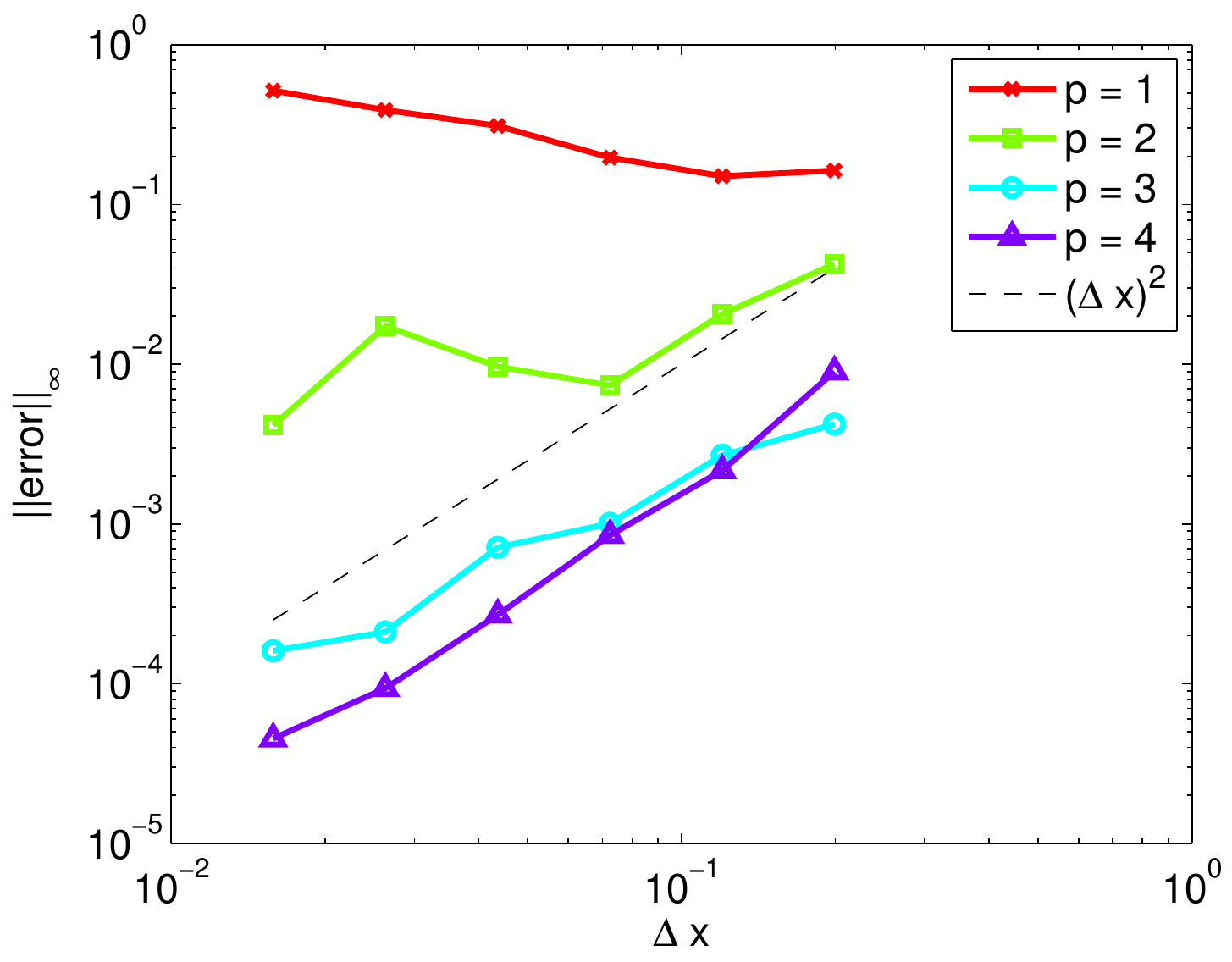}
\end{center}
\caption{Convergence study for the biharmonic
 equation on the unit circle, using BDF2 implicit time
 stepping, with $\param = \frac{4}{\dx^2}$.
}
\label{fig:biharm}
\end{figure}

\subsection{Reaction-diffusion equations on a triangulated surface}
The Gray--Scott reaction-diffusion equations are used as a model of
pattern formation \cite{GS1983, Pearson1993}. Formulated on a surface,
the equations are given by
\begin{subequations} \label{eq:surfGS}
\begin{align}
u_t &= \nu_u \lapsurf u - uv^2 + F(1-u)\\
v_t &= \nu_v \lapsurf v + uv^2 - (F+k)v
\end{align}
\end{subequations}
This example involves nonlinear terms in $u$ and $v$, which are simple
to treat with the method of lines. After extension and discretization
in space, the system of equations in the computational band becomes
\begin{subequations} \label{eq:GS}
\begin{align}
\uvec_t &= \nu_u \Emat \Lmat \uvec - \uvec\vvec^2 + F(1-\uvec)
- \param(\uvec - \Emat \uvec)\\
\vvec_t &= \nu_v \Emat \Lmat \vvec + \uvec\vvec^2 - (F+k)\vvec
- \param(\vvec - \Emat \vvec)
\end{align}
\end{subequations}
The surface is a triangulated genus 3 shape
\cite{genus3},
from which a closest point function is calculated \cite{MR2008}. This
system of equations is solved with an implicit-explicit IMEX scheme
treating the Laplace--Beltrami operators implicitly, and the nonlinear
terms explicitly \cite{Ruuth1995}. Diffusion constants used are $\nu_u
= (\Delta x)^2/9$, $\nu_v = \nu_u/2$, with parameters $k=0.063$,
$F=0.054$ \cite{munafo}. The results for $u$ at steady state are shown
in Figure \ref{fig:RD}.
\begin{figure}
\begin{center}
\includegraphics[width=0.5\textwidth]{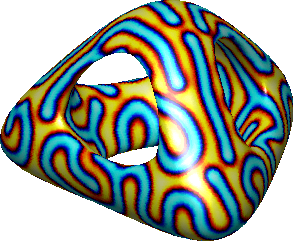}
\end{center}
\caption{Gray--Scott reaction-diffusion (\ref{eq:surfGS}) on a genus 3 surface.}
 \label{fig:RD}
\end{figure}

\subsection{Curvature-dependent diffusion on surfaces}
The geometry of the surface itself can be included in the PDE. This is
demonstrated with an example of a diffusion equation, where the
diffusivity depends on the curvature of the surface. Consider the equation
\begin{equation}
u_t(y) = \divsurf (a(y) \gradsurf u(y)),\label{eq:curvedepdiff}
\end{equation}
where we choose the inhomogeneous diffusivity $a(y)$ related to the
mean curvature $\kappa(y)$ of the surface by
\begin{equation*}
a(y) = \frac{1}{1 + |\kappa(y)|}.
\end{equation*}

We can use the closest point function representing the surface to calculate the
curvature directly. The mean curvature $\kappa$ on the surface is given by
\begin{equation}
\kappa(y) = |[\lap \cp](y)|_2.  \label{eq:meancurv}
\end{equation}

This follows from the fact that mean curvature vector $HN$ can be
written as the Laplace--Beltrami operator of the identity function on
the surface \cite{CDR2004}
\begin{equation*}
H(y) N(y) = - \lapsurf {Id_{\surf}}(y).
\end{equation*}
Here, $N(y)$ is the normal to the surface at the point $y$. Applying
the closest point principles, we have
\begin{equation*}
H(y) N(y) = -\lap (E Id_{\surf}) (y) = -[\lap \cp] (y),
\end{equation*}
since the closest point function is the extension of the identity on
the surface. The magnitude $\kappa$ of the mean curvature is found by
taking the two-norm of this expression. Following our discretization
of Section \ref{sec:MOL}, we compute the mean curvature based on
(\ref{eq:meancurv}) on the grid of the embedding space $B(\surf)$ by
\begin{align*}
  \boldsymbol\kappa &= \Emat \sqrt{
    (\Lmat \, \mathbf{cp_1})^2 + (\Lmat \, \mathbf{cp_2})^2 + (\Lmat
    \, \mathbf{cp_3})^2},
\end{align*}
where $\mathbf{cp_1}$, $\mathbf{cp_2}$, and $\mathbf{cp_3}$ are
vectors of the components of the closest point associated with each
grid point. From this we compute the vector $\mathbf{a}$ consisting of
the values of $a$ at each grid point.

The PDE (\ref{eq:curvedepdiff}) is simple to solve numerically using
the method of lines approach detailed in Sections~\ref{sec:singleeqn}
and~\ref{sec:MOL}.  The embedded surface with penalty term is
\begin{equation*}
v_t = E \, \diverge \, (a \grad v) - \param(v - Ev).
\end{equation*}
Now a standard scheme is used to discretize the variable coefficient
diffusion term which yields the semi-discrete form
\begin{equation*}
  \vvec_t = \Emat \Big[
                   \Dmat^x_b \big(\Amat^x_{\!f} \mathbf{a} \;
                   \Dmat^x_{\!f} \vvec\big) +
                   \Dmat^y_b \big(\Amat^y_{\!f} \mathbf{a} \;
                   \Dmat^y_{\!f} \vvec\big) +
                   \Dmat^z_b \big(\Amat^z_{\!f} \mathbf{a} \;
                   \Dmat^z_{\!f} \vvec\big) \Big]
    - \param\big(\Imat - \Emat\big)\vvec,
\end{equation*}
where $\Dmat_b$ and $\Dmat_{\!f}$ are the backward and forward
finite difference matrices in the direction indicated by the
superscripts.
Similarly, the $\Amat_{\!f}$ matrices refer to forward two-point
averages of the point-wise diffusivity values.
That is, the half-point diffusivities are approximated by the
averages:
\begin{equation*}
  a_{i+\hf,j,k} \approx \frac{a_{i+1,j,k} + a_{ijk}}{2}, \quad
  a_{i,j+\hf,k} \approx \frac{a_{i,j+1,k} + a_{ijk}}{2}, \quad
  a_{i,j,k+\hf} \approx \frac{a_{i,j,k+1} + a_{ijk}}{2},
\end{equation*}
This scheme can then be evolved with explicit Euler time-stepping.

Figures \ref{subfig:ellipse} and \ref{subfig:snowflake} show the
curves used to demonstrate this approach: an ellipse and the curve
parameterized by $x = (1 + \frac{1}{3} \cos (6 s)) \cos s , y = (1 +
\frac{1}{3} \cos (6 s)) \sin s$. Initial conditions are $u(s,0) =
\cos(3s)$. The resulting solutions at time $t = 0.5$ are shown in
Figures \ref{subfig:diffusionellipse} and
\ref{subfig:diffusionsnowflake}. Solutions are also computed using
Chebfun \cite{chebfunv4} (based on the parameterization) and plotted
for comparison.

\begin{figure}
\begin{center}
\subfloat[][]{
\includegraphics[height=0.21\textheight]{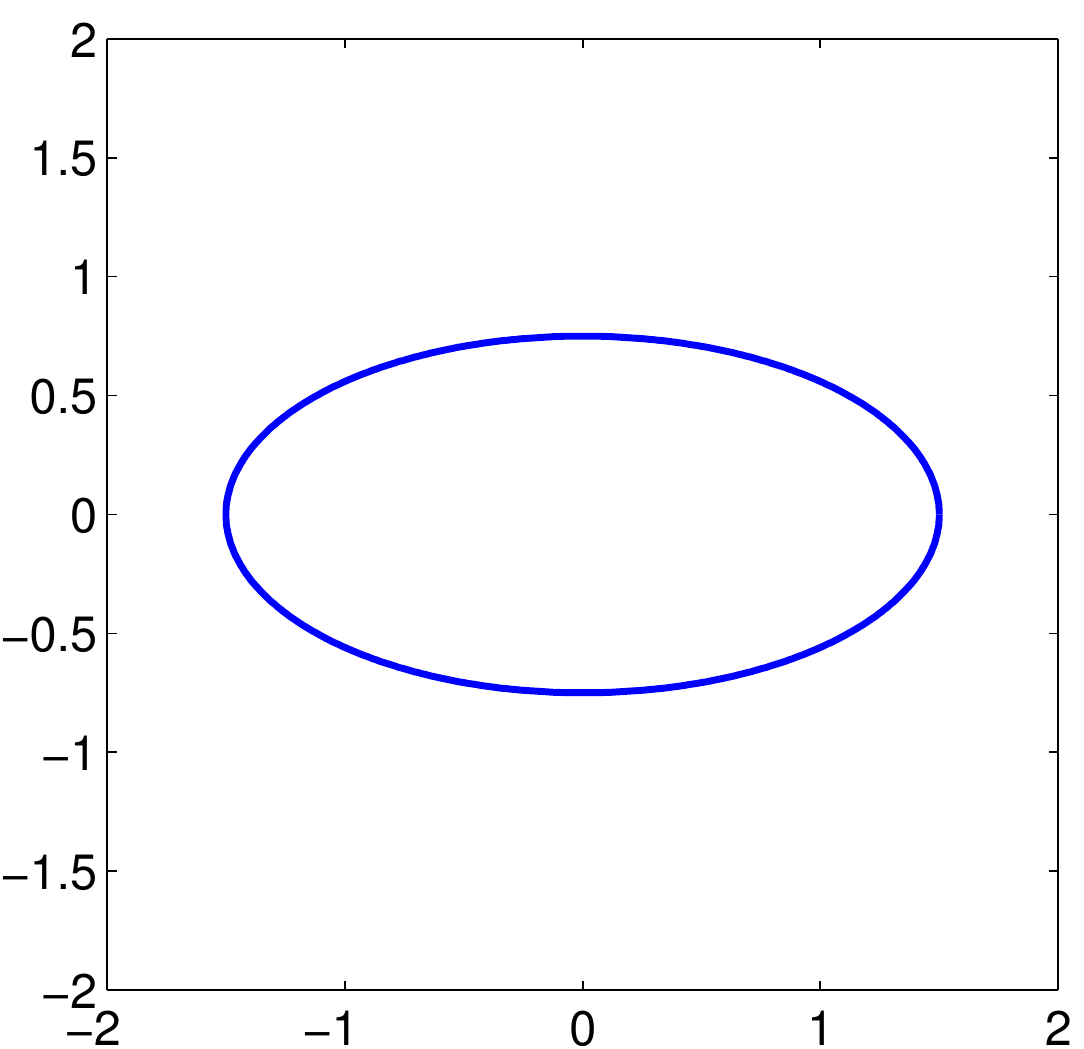}
\label{subfig:ellipse}
}
\subfloat[][]{
\includegraphics[height=0.21\textheight]{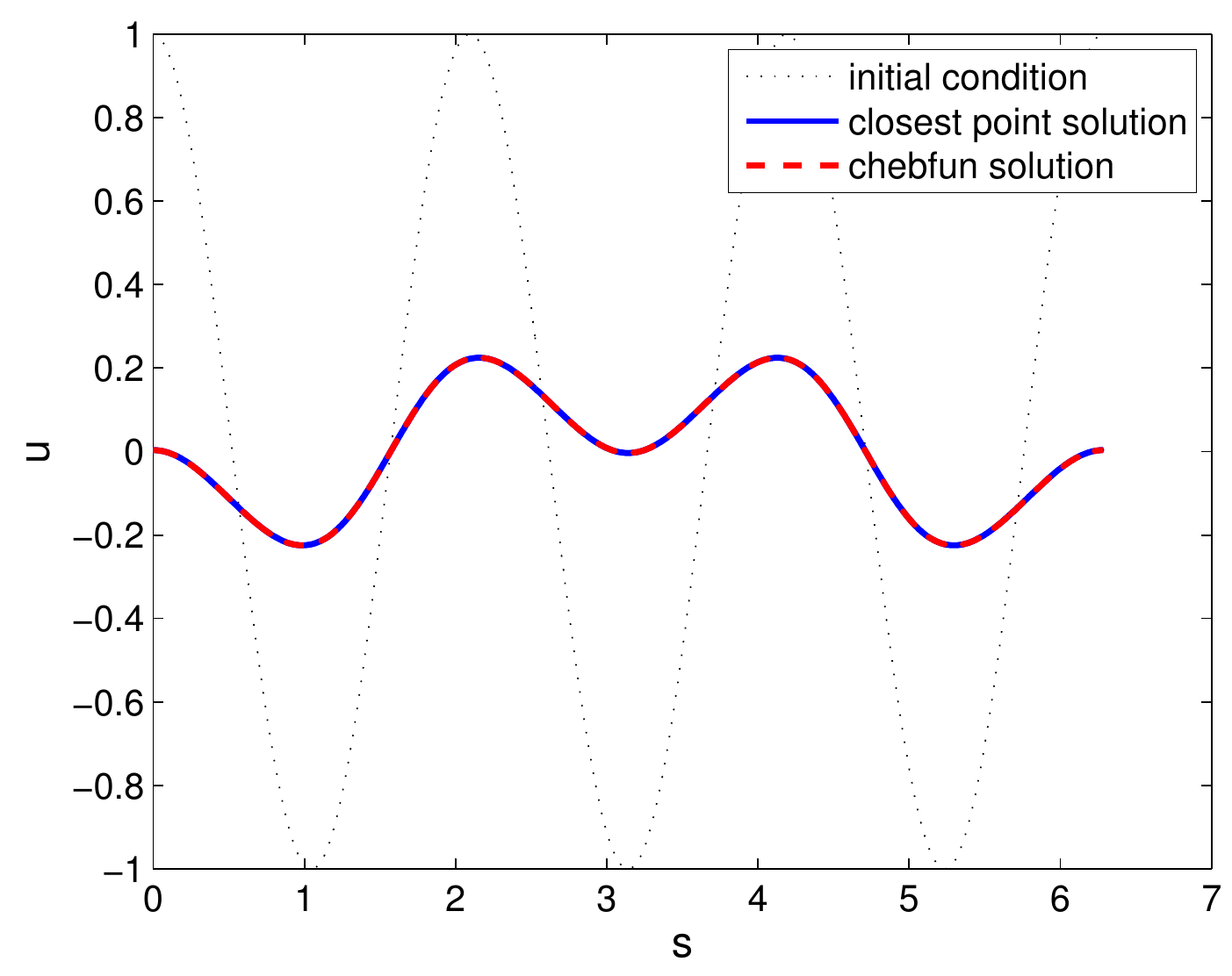}
\label{subfig:diffusionellipse}
}\\
\subfloat[][]{
\includegraphics[height=0.21\textheight]{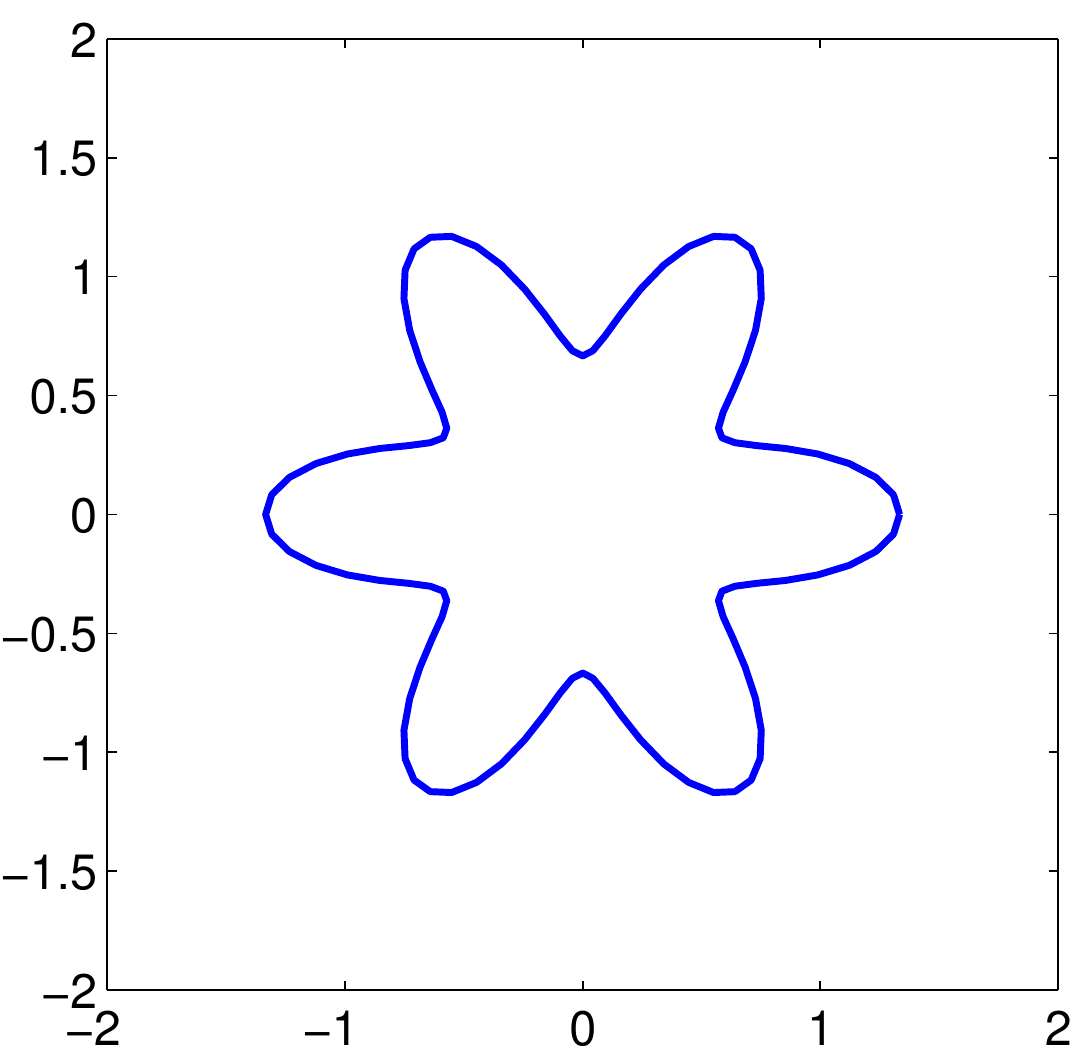}
\label{subfig:snowflake}
}
\subfloat[][]{
\includegraphics[height=0.21\textheight]{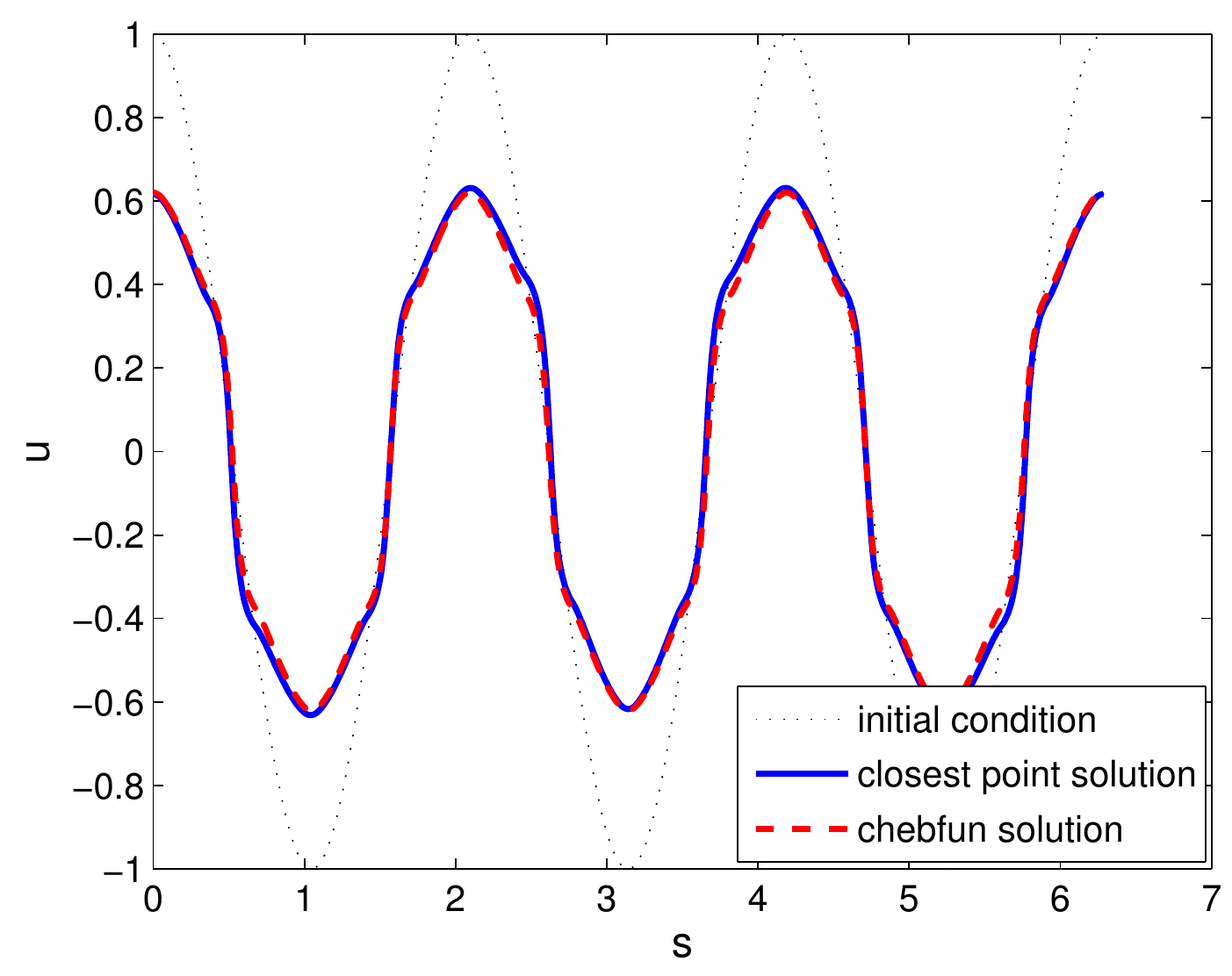}
\label{subfig:diffusionsnowflake}
}
\end{center}
\caption{Curvature-dependent diffusion on surfaces of non-constant
  curvature. Curves embedded in $\Real^2$ (a) and (c). Results of
  curvature-dependent diffusion (\ref{eq:curvedepdiff}) at time
  $t=0.5$, using our method and Chebfun \cite{chebfunv4} (b) and (d).}
 \label{fig:curvsnowflake}
\end{figure}

\subsection{Reaction-diffusion with curvature-dependent parameters}
\begin{figure}
\begin{center}
\subfloat[][]{
\includegraphics[width=0.5\textwidth]{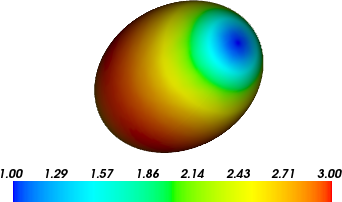}
\label{subfig:RDellipsecurv}
}\\
\subfloat[][]{
\includegraphics[width=0.3\textwidth]{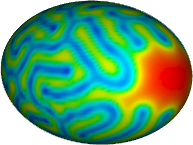}
\label{subfig:RDellipse1}
}
\hspace{0.2\textwidth}
\subfloat[][]{
\includegraphics[width=0.27\textwidth]{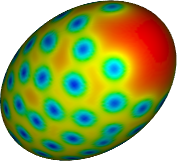}
\label{subfig:RDellipse2}
}
\end{center}
\caption{Curvature-dependent reaction-diffusion on an ellipsoid.
  Ratio of diffusion coefficients (a) --- inversely proportional to surface
  curvature, stripe (b) and spot (c) formation in
  regions of low curvature using the Gray--Scott model
  (\ref{eq:GS}).} \label{fig:RDcurvellipsoid}
\end{figure}
\begin{figure}
\begin{center}
\subfloat[][]{
\includegraphics[width=0.42\textwidth]{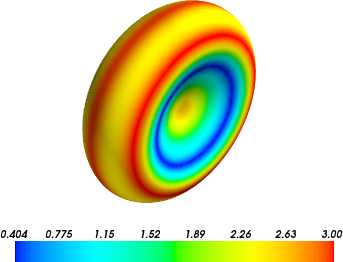}
\label{subfig:RBFcurv}
}
\subfloat[][]{
\includegraphics[width=0.29\textwidth]{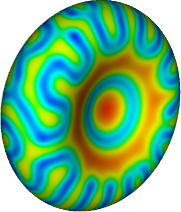}
\label{subfig:RBF1}
}
\subfloat[][]{
\includegraphics[width=0.25\textwidth]{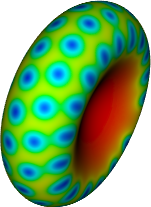}
\label{subfig:RBF2}
}\\
\end{center}
\caption{Curvature-dependent reaction-diffusion on a red blood cell
  shape.
  Ratio of diffusion coefficients (a) --- proportional to surface
  curvature, stripe (b) and spot (c) formation in
  regions of high curvature using the Gray--Scott model
  (\ref{eq:GS}).} \label{fig:RDcurvRBC}
\end{figure}
Through dependence on curvature, the geometry of the surface could
influence systems such as reaction-diffusion equations. In diffusion-driven
instability, the difference in diffusion coefficients of two chemical
species drives an instability leading to pattern formation
\cite{Pearson1993}. If the diffusivities vary across the surface,
patterns may form only in certain areas.

In the Gray--Scott model above, the ratio of diffusion coefficients
$\nu_v = \frac{\nu_u}{2}$ is used to form a patterned steady
state. With equal coefficient values, no patterns are formed. We now
consider a case where $\nu_v$ varies with curvature of the surface.

The approaches of the two previous numerical examples are
combined. The Gray--Scott scheme (\ref{eq:GS}) is solved on a surface of
non-constant curvature, with $\nu_v$ related to $\nu_u$ by
\begin{equation*}
\nu_v = \nu_u / \big(3 - \tfrac{2}{c_1 - c_2}(\kappa - c_2)\big),
\end{equation*}
where $c_1$ and $c_2$ are the maximum and minimum curvatures of the
surface. At areas of low curvature, the ratio will be close to 3,
while areas of high curvature will have equal coefficients. 

We expect patterns to form preferentially in low-curvature areas, as
demonstrated in Figure \ref{fig:RDcurvellipsoid}. Initial conditions
are taken to be the steady state $(u_0,v_0) = (1,0)$, with random
Gaussian noise added. Figure \ref{fig:RDcurvellipsoid}a shows the
ratio of the diffusivities calculated from the mean curvature of an
ellipsoid. Steady states for $u$ demonstrating spot and stripe formation
on this surface are shown in Figures \ref{fig:RDcurvellipsoid}b and
\ref{fig:RDcurvellipsoid}c. Parameters used are $F = 0.026$,
$k = 0.061$ for spots and $F = 0.054$, $k = 0.063$ for stripes, as
expected on flat domains \cite{munafo}. A similar system is solved on
a parameterized red blood cell shape (derived in \cite{EF1972} and
used with reaction-diffusion models in \cite{FW2012}), this time with
nonequal coefficients at areas of high curvature. Figure
\ref{fig:RDcurvRBC} shows spot and stripe formation on the
high-curvature regions of the surface.

\section{Conclusions}
\label{sec:conclusions}

We have introduced a new formulation of an embedding method for
solving partial differential equations (PDEs) on surfaces, based on
the closest point representation. Our formulation results from the
addition of a penalty term to the surface PDE, which helps ensure that
the solution in the embedded space stays constant in the normal direction.
Like the original closest point method of Ruuth and Merriman, the
method is simple and very general with respect to surface geometry,
dimension and co-dimension.

Compared to previous attempts to construct an implicit closest point
method, our method has an advantage in that it works for variable
coefficient and nonlinear PDEs. Because the method allows a
method-of-lines discretization, it can be used with either implicit or
explicit timestepping (and, although not our focus here, for elliptic problems).
Our approach also seems simpler to analyze.

The solutions of the new embedding equation, when restricted to the
surface, are shown to correspond with a one-to-one map to the
solutions of the original PDE. The modified equation involves a
parameter; we show that, while in the continuous problem any value
will work, in numerical discretizations the value is important.
In particular, the effect of this penalty parameter on stability is
analyzed, and numerical studies of convergence are shown for the
Laplace--Beltrami operator and surface biharmonic operators.
Examples demonstrate the effectiveness of the method for nonlinear
operators on various parameterized and triangulated surfaces, in
particular relating to curvature-dependent diffusion.

Future work could investigate fully nonlinear problems and the role of
the penalty parameter in higher-order problems, for example, a more
thorough treatment of surface biharmonic problems.

\subsection*{Acknowledgements}
We thank Dr Anotida Madzvamuse (Sussex) for suggesting the application
of curvature-influenced reaction-diffusion.

\bibliography{cpmol}{}
\bibliographystyle{siam}

\end{document}